\documentclass[12pt]{amsart} 
\usepackage[a4paper, margin = 2.5cm]{geometry}
\usepackage{enumerate,amsmath,amssymb,amsthm,bbm,hyperref}
\newcommand{\set}[1]{\left\{#1\right\}}
\newcommand{\norm}[1]{\left\lVert#1\right\rVert}
\newcommand{\bnorm}[1]{\big\lVert#1\big\rVert}
\newcommand{\sprod}[1]{\left\langle#1\right\rangle}
\newcommand{\pprod}[1]{\left(#1\right)}
\newcommand{\bprod}[1]{\bigl(#1\bigr)}
\newcommand{\ind}[1]{\mathbbm{1}_{#1}}

\theoremstyle{plain}\def\HK{\mathbb{H}_X}
\theoremstyle{plain}
\def\HKX{\mathcal{H}_X}

\newtheorem{theorem}{Theorem}[section]
\newtheorem{proposition}[theorem]{Proposition}
\newtheorem{corollary}[theorem]{Corollary}
\newtheorem{lemma}[theorem]{Lemma}
\theoremstyle{remark}

\newtheorem{remark}[theorem]{Remark}

\newcommand{\R}{\mathbb{R}}
\newcommand{\T}{\mathbb{T}}
\newcommand{\F}{\mathcal F}

\newcommand{\pr}{\mathrm{P}}
\newcommand{\ex}[1]{\mathrm{E}\left[\,#1\,\right]}
\newcommand{\bex}[1]{\mathrm{E}\bigl[\,#1\,\bigr]}

\newcommand{\eps}{\varepsilon}

\newcommand{\cY}{\CE{y}}
\newcommand{\CE}{}
\begin{document}

\title[Boundary non-crossing probabilities of Gaussian processes]{Boundary non-crossing probabilities of Gaussian processes: sharp bounds and asymptotics}
\author{Enkelejd Hashorva \and Yuliya Mishura \and Georgiy Shevchenko}

\begin{abstract}
We study boundary non-crossing probabilities 
$$
P_{f,u} := \pr\big(\forall t\in \T\ X_t + f(t)\le u(t)\big)
$$
for continuous centered Gaussian process $X$  indexed by some arbitrary compact separable metric space $\T$. We obtain both upper and lower bounds for $P_{f,u}$. The bounds are matching in the sense that they lead to precise logarithmic asymptotics for the large-drift case $P_{\cY f,u}$, $\cY\to+\infty$, which are two-term approximations (up to $o(\cY)$). The asymptotics are formulated in terms of the solution $\tilde f$ to the constrained optimization problem
$$
\norm{h}_{\HK}\to \min, \quad h\in \HK, h\ge f
$$
in the reproducing kernel Hilbert space $\HK$ of $X$. Several applications of the results are further presented. 
\end{abstract}

\keywords{Gaussian process, boundary non-crossing probability, large deviations,  reproducing kernel Hilbert space, Cameron--Martin theorem, constrained quadratic optimization problem, compact metric space}

\subjclass[2010]{60G15; 60G70; 60F10}

\maketitle
\section{Introduction}
In this article we are interested in boundary non-crossing probabilities
\begin{equation*} 
P_{f,u} := \pr\big(\forall t\in \T\ X_t + f(t)\le u(t)\big).
\end{equation*}
Here $X$ is a continuous centered Gaussian process defined on a compact separable metric space $\T$, $u\colon \T\to \R$ (boundary) and $f\colon \T\to \R$ (trend)  are some  deterministic functions\footnote{\CE{Further we impose stronger assumptions on the drift than on the boundary and also consider the asymptotics of $P_{yf,u}$ when $y\to+\infty$, and for this reason we do not write the probability in question  as $\pr\big(\forall t\in \T\ X_t \le g(t)\big)$ with $g(t) = u(t) - f(t)$.}}.  The continuity assumption is motivated by the observation that in order for the probability to be well defined, the corresponding event has to be generated by values of $X$ on  some countable subset of $\T$. Two most natural situations when this happens are the case of countable $\T$ (which we will  study elsewhere) and the case of a continuous process defined on a separable metric space, studied here. We further restrict ourselves to the more tractable case of compact $\T$ (and we also show how the case of locally compact $\T$ can be reduced to it). Sufficient conditions for continuity of Gaussian process are given in e.g.\ \cite[Chapter 10]{lifshits}.

Explicit formulas for $P_{f,u} $ are known only for very special $X$ and particular  $u,f$ with most prominent example $X$ being a Wiener process and $u,f$ being piece-wise linear functions, see e.g., \cite{KL98,NFR03,PW01}. In the absence of explicit formulas, several authors have obtained upper and lower bounds for the non-crossing probabilities of Gaussian processes with trend and/or  their asymptotic behavior. We list just few references on such question: Wiener process was considered in \cite{BHH07,BN05,Has05}; Brownian bridge, in \cite{BH05,BHHM03,BMHH03};  Brownian pillow and Brownian sheet in  \cite{Has10,BW}; additive Wiener field, in  \cite{HM14}; fractional Brownian motion, in \cite{HMS15}; \CE{ for closely related investigations, see 
	\cite{MR3608132,MR4001018}}.  


In the case where $\T=[0,T]$ the boundary non-crossing probabilities $P_{f,u}$ are related to survival probabilities
$$\pr\bigl(\forall t\in \T\ X_t+f(t)<u(t)\bigr)= \pr(\tau_u>T),$$ where $\tau_u = \inf\set{t\ge 0: X_t+f(t) \ge u(t)}$ is the hitting time of a moving boundary $u$. Such probabilities (typically their asymptotic behavior as $T\to\infty$) are studied in the now very active topic of persistence probabilities. We refer to \cite{aurzada} for a comprehensive review of the topic.

Under the continuity assumption, the process $X$ can be regarded as a centered Gaussian element in the separable Banach space (equipped with supremum norm)  $$C_0(\T;\T_0):=\set{g\in C(\T): \forall t\in \T_0 \ \ g(t) = 0 }$$
of continuous functions vanishing on the zero set $\T_0:= \set{t\in \T: X_t = 0\text{ a.s.}}$ of $X$. The zero set of the process is emphasized since the crucial role in asymptotic results is played by the injectivity of the covariance operator, which is defined on the dual space. In case of $C_0(\T;\T_0)$ its  dual is the space $M(\T_1)$ of signed finite measures on $\T_1 = \T\setminus \T_0$. If the process were considered as an element of $C(\T)$, the dual would be $M(\T)$, and the kernel of the covariance operator will contain the measures supported by $\T_0$. So such a setting is chosen to allow for the greatest generality (and note that $\T_0$ may be empty).

Our approach to getting bounds for $P_{f,u}$ is based on the change of measure with the help of Cameron--Martin theorem. For this reason we assume that  $P_{0,u} \in (0,1)$ and the drift $f$  belongs to the Cameron--Martin space (or reproducing kernel Hilbert space, RKHS)  $\mathbb{H}_X$ of $X$. 
The latter is defined in terms of the covariance function 
$$
R(t,s) = \ex{X_t X_s},\quad t,s\in \T
$$
as the completion of the space spanned by $R(t,\cdot)$ with respect to the scalar product defined as a linear extension of
$$
\bigl( R(t,\cdot),R(s,\cdot)\bigr)_{\mathbb H_X} = R(t,s).
$$
The Cameron--Martin space can be also described in terms of the covariance operator, defined by
$$
\sprod{\mathcal R \mu, \nu} = \ex{\sprod{X,\mu}\sprod{X,\nu}}, \mu,\nu\in M(\T_1),
$$
where $\sprod{\cdot,\cdot}$ denotes the duality pairing. 

A general lower bound for $P_{f, u}$ follows from \cite[Proposition 1.6]{aurzada-dereich}. For any $f\in \HK$, let $\tilde f$ be the \CE{metric} projection of zero to the closed convex set $C_f:=\{h \in \mathbb{H}_X, h\ge f\}$. Then, applying \cite[Proposition 1.6]{aurzada-dereich} with $S = C_f - f$ and the drift $f-\tilde f$, we get
\begin{equation}
\label{lB:gen}
P_{f,u} \ge  P_{ f- \tilde f, u} \exp \Bigl \{ -\frac{1} 2 {\bnorm{\tilde f}_{\mathbb{H}_X}^2 } - \bnorm{\tilde f}_{\mathbb{H}_X} \sqrt{-2  \log P_{ f- \tilde f, u}}  \Bigr\}.
\end{equation}	
We note in passing that comparable lower bounds to \eqref{lB:gen} follow also by \cite[Theorem 1.1']{KL98} or \cite[Theorem 7.3]{lifshits}.
From the above,  if further $P_{0,u} \in (0,1)$, then 
\begin{eqnarray}
\label{lower:LD}
\log P_{\cY f,u}  \ge  -\frac{\cY^2} 2 {\bnorm{\tilde f}_{\mathbb{H}_X}^2 }+ O(\cY), \quad  \cY\to \CE{+}\infty.
\end{eqnarray} 
The main (and a hard) problem is the derivation of an accurate  upper bound for $P_{\cY f,u}$ (which matches \eqref{lower:LD}) valid for all large $\cY$. One approach goes through the general large deviation principle for Gaussian measures, it is explained in detail in Subsection~\ref{sec:ldp}.

In this contribution we show that a sharp upper bound for $P_{f, u}$ can be determined if there exists a non-negative finite measure $\tilde \gamma\in M(\T_1)$ such that $\tilde f = \mathcal R \tilde \gamma\ge f$ and $\bprod{f-\tilde f,\tilde f}_{\HK}\ge 0$. In this case we establish in Theorem~\ref{thm:general} the following upper bound:
\begin{eqnarray*} 
	P_{f,u} \le P_{0,u} \exp \Bigl \{- \frac{1}{2}\bnorm{\tilde f}_{\HK}^2 + \Theta(\CE{\tilde \gamma}, u) \Bigl \},
\end{eqnarray*}
where $$\Theta(\tilde \gamma, u) = \int_{\T_1} u(t)\tilde \gamma(dt),$$
and a similar lower bound. Under the additional assumption that the operator $\mathcal R$ is injective and some special assumption on $\HK$, we identify $\tilde f$ with the aforementioned projection and prove that
\begin{eqnarray}
\label{ldp}
\log P_{ \cY f,u} = -\frac{\cY^2}{2} \bnorm{\tilde f}^2_{\HK} + \cY \,   \Theta(\CE{\tilde \gamma}, u) + o(\cY), \quad \cY\to +\infty,
\end{eqnarray}
which implies an equality in \eqref{lower:LD} and further refines the asymptotics. 

In the special case where $X$ is a standard Wiener process,  $\tilde f$ is the least non-decreasing concave majorant of $f$, and the above asymptotics agrees with the known results for Brownian motion, see e.g.\ \cite{BHH07}.


The paper is organized as follows. The main results of the article are displayed  in Section 2, which starts with a brief introduction to Gaussian processes. In Subsection 2.1, we establish both upper and lower bounds for $P_{f,u}$.  The obtained results are then used to derive logarithmic asymptotics of non-crossing probabilities in Subsection 2.2. \CE{Subsection~\ref{sec:localcomp} shows how the results can be extended to the case of a locally compact parameter space. In Subsection~\ref{sec:ldp}, we show how an asymptotic upper bound matching~\eqref{LD:upper} can be derived using the general large deviation principle for Gaussian measures. In Section 3, we illustrate the  findings of Section 2 considering several important Gaussian processes. Section A contains some auxiliary results. }

\section{Main results} \label{sec:general}

Throughout the paper, $(\Omega,\F,\pr)$ is a complete probability space carrying all objects under consideration.

As in the Introduction,  $X=\set{X_t, t\in \mathbb T}$ is a continuous centered real-valued Gaussian process defined on some compact separable metric space $(\T, \tau)$, with  covariance function $R(t,s)$, 
which (thanks to continuity of $X$) is continuous in both $t$ and $s$. The zero set $\T_0 = \set{t\in \T: X_t = 0\text{ a.s.}}$ of $X$ is closed and can be given in terms of the covariance function:
$$
\T_0 = \set{t\in \T: R(t,t) = 0} =  \set{t\in \T: \forall s\in \T \ R(t,s) = 0 }.
$$
Recall that the process $X$ is a Gaussian element in the separable Banach space $C_0(\T;\T_0)$ of functions vanishing on $\T_0$, whose dual is $M(\T_1)$.
Hereafter  $\sprod{\cdot,\cdot}$  shall denote the duality pairing between $C_0(\T;\T_0)$ and $M(\T_1)$ i.e. 
$$\sprod{x,\mu} = \int_{\T} x(t) \mu(dt),\ x\in C_0(\T;\T_0), \mu\in M(\T_1)
$$ as well as between other spaces and their duals. We slightly abuse notation here since  $\mu$ is not defined on $\T_0$; there is no danger since $x(t)=0$ for $t\in \T_0$.

The covariance operator $\mathcal R\colon M(\T_1)\to C_0(\T;\T_0)$ corresponding to $X$ can be equivalently defined by
$$
\mathcal R \mu(t) = \int_{\T} R(t,s)\mu(ds), \mu\in M(\T_1),
$$
or
\begin{equation}\label{eq:R-sprod}
\sprod{\mathcal R \mu,\nu} =\ex{\sprod{X,\mu}\sprod{X,\nu}} =\int_{\T}\int_{\T} R(t,s)\mu(ds)\nu(dt), \mu,\nu\in M(\T_1).
\end{equation}
Since $R$ is a non-negative definite function, \eqref{eq:R-sprod}  defines an inner product on the quotient of $M(\T_1)$ modulo $\ker\mathcal R$. The completion of the latter with respect to this inner product is the Hilbert space of so-called measurable linear functionals, which will be denoted by $\mathcal H_X$, and the corresponding inner product will be denoted by $\pprod{\cdot,\cdot}_{\mathcal H_X}$. Moreover, thanks to \eqref{eq:R-sprod}, the operator $\mathcal R$ can be extended to $\mathcal H_X$ by continuity so that
$$
\pprod{\mu_1,\mu_2}_{\HKX} = \sprod{\mathcal R \mu_1,\mu_2}, \mu_1\in \HKX,\mu_2\in M(\T_1).
$$
Again, by continuity, the above duality pairing  can be extended to $\mu_1,\mu_2\in \mathcal H_X$. Similarly, by \eqref{eq:R-sprod}, $\sprod{X,\cdot}$ can be extended to an isometry between $\mathcal H_X$ and some subspace of $L^2(\Omega)$.  

It is also worth noting that for any $\mu\in \mathcal H_X$, the random variable $\sprod{X,\mu}$, being a mean square limit of centered Gaussian random variables, is a centered Gaussian random variable with variance $\bex{\sprod{X,\mu}^2} = \norm{\mu}^2_{\mathcal H_X}$.

\begin{remark}
	We slightly abuse rigor here, because the space $\mathcal H_X$ is  a completion of the quotient $M(\T_1)/\ker \mathcal R$, not a completion of $M(\T_1)$.
	For example, the book \cite{lifshits} goes through  $I^*\colon M(\T_1)\to M(\T_1)/\ker \mathcal R$. However, we decided to keep this slightly ambiguous notation for the sake of clarity and simplicity and in view of the fact that the main results of this article are formulated for the case where $\mathcal R$ is injective.
\end{remark}

Further, $\mathcal R$ defines an isometry between $\mathcal H_X$ and its image $\mathbb H_X = \mathcal R \mathcal H_X$ equipped with the inner product
$$ \pprod{\mathcal R \mu_1,\mathcal R \mu_2}_{\HK} = \pprod{\mu_1,\mu_2}_{\HKX}.
$$
Defining for $t\in \T$ the Dirac measure $\delta_t$ by $\sprod{x,\delta_t} = x(t)$, $x\in C_0(\T;\T_0)$, we have
\begin{eqnarray}
\label{covR}
\pprod{R(t,\cdot), R(s,\cdot)}_{\HK} =  \pprod{\mathcal R\delta_t, \mathcal R\delta_s}_{\HK} = \pprod{\delta_t,\delta_s}_{\HKX}  = R(t,s)
\end{eqnarray}
for all $t,s\in \T$, so the space $\HK$ is indeed the reproducing kernel Hilbert space (RKHS) of $X$, since it is unique with respect to the covariance reproducing property \eqref{covR}.

We present below  the classical Cameron--Martin theorem for $X$, see \cite[Theorem 5.1]{lifshits}. The formulation in \cite{lifshits} is given in terms of push-forward measures induced by $X$ and $X+f$ and is slightly different from the one given below, but it is easily seen to be equivalent.

\begin{lemma}[Cameron--Martin theorem]\label{thm:cm-general}
	If $f = \mathcal R \mu\in \mathbb H_X$, then the distribution of $X+f$ with respect to $\pr$ is the same as that of $X$ with respect to the measure $\pr^f$ with 
	$$
	\frac{d\pr^f}{d\pr} = \mathcal{E}_{X}(\mu) := \exp\Bigl\{ \sprod{X,\mu} - \frac12 \norm{\mu}_{\mathcal H_X}^2\Bigr\} = \exp\Bigl\{ \sprod{X,\mu} - \frac12 \norm{f}_{\mathbb H_X}^2\Bigr\}.
	$$
\end{lemma}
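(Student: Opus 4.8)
The plan is to deduce the stated formulation from the classical Cameron--Martin theorem in the Banach-space setting, as recorded in \cite[Theorem 5.1]{lifshits}, and then to verify that the two formulations are equivalent. Write $\mu_X$ for the law of $X$ on $B := C_0(\T;\T_0)$ and $\mu_X^f$ for the law of $X+f$, i.e.\ the translate $\mu_X(\cdot-f)$. The cited theorem asserts that, for $f\in\HK$ with $f=\mathcal R\mu$, the translate $\mu_X^f$ is absolutely continuous with respect to $\mu_X$ with Radon--Nikodym density $x\mapsto\mathcal E_x(\mu)=\exp\{\sprod{x,\mu}-\tfrac12\norm{\mu}_{\HKX}^2\}$, where $\sprod{x,\mu}$ denotes the measurable linear functional on $B$ associated with $\mu\in\HKX$ (the $\mu_X$-a.e.\ defined extension of $x\mapsto\sprod{x,\mu}$ from $M(\T_1)$ to $\HKX$, available by the isometry recorded in the excerpt).

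First I would record that $\mathcal E_X(\mu)$ is a genuine probability density, so that $\pr^f$ is indeed a probability measure: since $\sprod{X,\mu}$ is centered Gaussian with variance $\norm{\mu}_{\HKX}^2$, one has $\bex{\mathcal E_X(\mu)}=\exp\{-\tfrac12\norm{\mu}_{\HKX}^2\}\,\bex{e^{\sprod{X,\mu}}}=1$. Next, for any bounded Borel functional $\Phi$ on $B$, the defining relation of $\pr^f$ together with the pushforward and Radon--Nikodym formulas gives
\[
\exf{\Phi(X)}=\bex{\Phi(X)\,\mathcal E_X(\mu)}=\int_B\Phi\,\frac{d\mu_X^f}{d\mu_X}\,d\mu_X=\int_B\Phi\,d\mu_X^f=\bex{\Phi(X+f)},
\]
which is exactly the assertion that the law of $X$ under $\pr^f$ coincides with the law of $X+f$ under $\pr$.

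Alternatively, I could give a self-contained argument by matching characteristic functionals, which also clarifies the role of the RKHS identifications. For a test functional $\nu\in M(\T_1)$ the pair $\bprod{\sprod{X,\nu},\sprod{X,\mu}}$ is jointly centered Gaussian with covariance $\pprod{\mu,\nu}_{\HKX}$ and variances $\norm{\nu}_{\HKX}^2$ and $\norm{\mu}_{\HKX}^2$, so the Gaussian Laplace--Fourier formula yields
\[
\bex{e^{i\sprod{X,\nu}}\mathcal E_X(\mu)}=\exp\Bigl\{-\tfrac12\norm{\nu}_{\HKX}^2+i\,\pprod{\mu,\nu}_{\HKX}\Bigr\}.
\]
Since $\sprod{f,\nu}=\sprod{\mathcal R\mu,\nu}=\pprod{\mu,\nu}_{\HKX}$ by \eqref{eq:R-sprod}, this equals $\bex{e^{i\sprod{X+f,\nu}}}$; as finite Borel measures on the separable Banach space $B$ are determined by their characteristic functionals (equivalently, both laws are those of continuous Gaussian processes with matching finite-dimensional distributions), the two laws agree.

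The routine parts are the Gaussian moment computations and the bookkeeping in the change-of-measure identity. The only genuine subtlety is the careful handling of $\sprod{X,\mu}$ when $\mu\in\HKX$ is a limit point rather than an honest finite measure in $M(\T_1)$; for this I would rely on the isometric extension of $\sprod{X,\cdot}$ to $\HKX$ already established in the excerpt, and on the identity $\sprod{\mathcal R\mu,\nu}=\pprod{\mu,\nu}_{\HKX}$, which ensures the drift term $\sprod{f,\nu}$ is computed consistently across the two dual pairings.
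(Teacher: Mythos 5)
Your proposal is correct, and its first half coincides with the paper's own treatment: the paper gives no proof beyond citing \cite[Theorem 5.1]{lifshits} and remarking that the push-forward formulation there ``is easily seen to be equivalent'' to the statement; writing $\mu_X$ and $\mu_X^f$ for the laws of $X$ and $X+f$, your chain $\exf{\Phi(X)}=\bex{\Phi(X)\,\mathcal{E}_X(\mu)}=\int_B\Phi\,d\mu_X^f=\bex{\Phi(X+f)}$ is exactly that equivalence made explicit, together with the routine check that $\mathcal{E}_X(\mu)$ integrates to $1$. Your second, characteristic-functional argument is a genuinely different and fully self-contained route that the paper does not take: it bypasses the Banach-space Cameron--Martin theorem entirely, at the price of invoking the fact that Borel probability measures on a separable Banach space are determined by their characteristic functionals over the dual $M(\T_1)$. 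What it buys is transparency at the one delicate point, namely that $\sprod{X,\mu}$ for $\mu\in\HKX$ is only the isometric extension of an honest dual pairing, and that the drift term is computed consistently via $\sprod{f,\nu}=\sprod{\mathcal R\mu,\nu}=\pprod{\mu,\nu}_{\HKX}$ from \eqref{eq:R-sprod}; your Gaussian computation $\bex{e^{i\sprod{X,\nu}}\mathcal{E}_X(\mu)}=\exp\bigl\{-\tfrac12\norm{\nu}_{\HKX}^2+i\pprod{\mu,\nu}_{\HKX}\bigr\}$ is correct, since $\bigl(\sprod{X,\nu},\sprod{X,\mu}\bigr)$ is jointly centered Gaussian by the $L^2$-limit construction. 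Either argument alone would suffice; together they fully justify the equivalence the paper leaves to the reader.
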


\subsection{Bounds for non-crossing probabilities} \label{subsec:bounds}

In this section, we study the boundary non-crossing probability
$$
P_{f,u}:= \pr\big(\forall t\in\T\ \ X_t + f(t)\le u(t)\big).
$$
Here $f\in\HK$ and $u\colon \T\to \R$ is a lower semicontinuous function such that $P_{0,u}>0$.
The assumption of lower semicontinuity of $u$ does not harm the generality. Indeed, in view of the continuity of $X$ and $f$, for any bounded function $u\colon \T\to \R$ we have 
$$
\set{\forall t\in\T\ \ X_t + f(t)\le u(t)} = \set{\forall t\in\T\ \ X_t + f(t)\le u_*(t)},
$$
where $u_*$ is the lower semicontinuous envelope of $u$. 

Further we derive lower and upper bounds for $P_{f,u}$ for any trend   $f\in \HK$.  
%
Denote by $M^+(\T_1)$ the set of finite non-negative measures on $\T_1$ and recall that $\Theta(\tilde \gamma,u) = \int_{\T_1}u(t)\tilde \gamma(dt)$.
\begin{theorem} \label{thm:general}
	Let $f\in \HK$ and suppose that $\tilde f = \mathcal R \tilde \gamma$ with $\tilde \gamma$ satisfying condition  
	\begin{itemize}
		\setlength{\itemindent}{3em}\setlength{\listparindent}{3.5em}\setlength{\parskip}{3em}
		\item[\rm (G1)] $\tilde \gamma\in M^+(\T_1)$.
	\end{itemize}
	
	\noindent 1. If the condition
	\begin{itemize}
		\setlength{\itemindent}{3em}\setlength{\listparindent}{3.5em}\setlength{\parskip}{3em}
		\item[\rm (G2)] $ \sprod{f-\tilde f,\tilde \gamma}\ge 0$
	\end{itemize}	
	is satisfied, then
	\begin{eqnarray} \label{eq:noncrossingprobupperbound-verygeneral}
	P_{f,u} \le P_{f-\tilde f,u} \exp \Bigl \{- \frac{1}{2}\bnorm{\tilde f}_{\HK}^2 + \Theta(\tilde \gamma, u) \Bigr \}.
	\end{eqnarray}

	\noindent 2.
	Let  $u_-\colon \T\to \R$ be a continuous function such that   $u_{-}(t) < u(t)$ for all $t\in\T$. If further
	$$
	P_{0,u,u_-} := \pr\big(\forall t\in\T\ \ u_{-}(t)\le X_t\le u(t)\big)>0
	$$
	and  condition  
	\begin{itemize}
		\setlength{\itemindent}{3em}\setlength{\listparindent}{3.5em}\setlength{\parskip}{3em}
		\item[\rm (G3)] $\tilde f\ge f$, i.e. $\tilde f(t)\ge f(t)$ for all $t\in\T$
	\end{itemize}
	holds, then
	\begin{equation}\label{eq:lower}
	\begin{gathered}
	P_{f,u} \ge P_{0,u,u_{-}}  \exp \Bigl  \{ - \frac{1}{2} \bnorm{ \tilde f}_{\HK}^2 + \Theta(\tilde \gamma, u_{-}) \Bigr \}.
	\end{gathered}
	\end{equation}
	
\end{theorem}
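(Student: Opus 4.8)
The plan is to prove both inequalities with a single device: shift the process by $\tilde f = \mathcal R\tilde\gamma$ via the Cameron--Martin theorem (Lemma~\ref{thm:cm-general} applied with $\mu = \tilde\gamma$ and drift $\tilde f$), and then exploit the sign condition (G1) on $\tilde\gamma$ together with the pathwise inequality defining the non-crossing event to control the linear functional $\sprod{X,\tilde\gamma}$. The factor $\exp\{-\tfrac12\norm{\tilde f}^2_{\HK}\}$ will come for free from the Cameron--Martin density written in the form $\mathcal E_X(\tilde\gamma) = \exp\{\sprod{X,\tilde\gamma} - \tfrac12\norm{\tilde f}^2_{\HK}\}$.

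For Part~1, I would note that, by path-continuity, the indicator of the target event equals $G(X+\tilde f)$ for $G(x)=\ind{\forall t\in\T\ x(t)+f(t)-\tilde f(t)\le u(t)}$, since $(X+\tilde f)+f-\tilde f = X+f$. Lemma~\ref{thm:cm-general} then gives
$$P_{f,u} = \ex{G(X+\tilde f)} = \ex{G(X)\,\mathcal E_X(\tilde\gamma)} = \ex{\ind{B}\exp\{\sprod{X,\tilde\gamma} - \tfrac12\norm{\tilde f}^2_{\HK}\}},$$
where $B=\set{\forall t\in\T\ X_t+(f-\tilde f)(t)\le u(t)}$ has probability $P_{f-\tilde f,u}$. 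On $B$ one has $X_t\le u(t)-f(t)+\tilde f(t)$ pointwise, so integrating against the nonnegative measure $\tilde\gamma$ (condition (G1)) yields
$$\sprod{X,\tilde\gamma} \le \Theta(\tilde\gamma,u) - \sprod{f-\tilde f,\tilde\gamma} \le \Theta(\tilde\gamma,u),$$
the last step being exactly condition (G2). Substituting this bound inside the expectation produces \eqref{eq:noncrossingprobupperbound-verygeneral}.

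For Part~2, I would first invoke the monotonicity furnished by (G3): since $\tilde f\ge f$, the event $\set{\forall t\ X_t+\tilde f(t)\le u(t)}$ is contained in the event defining $P_{f,u}$, whence $P_{f,u}\ge P_{\tilde f,u}$. Applying the same Cameron--Martin shift, now with $G(x)=\ind{\forall t\in\T\ x(t)\le u(t)}$, gives
$$P_{\tilde f,u} = \ex{\ind{\forall t\in\T\ X_t\le u(t)}\exp\{\sprod{X,\tilde\gamma}-\tfrac12\norm{\tilde f}^2_{\HK}\}}.$$
Restricting the expectation to the smaller event $C=\set{\forall t\ u_{-}(t)\le X_t\le u(t)}$, of probability $P_{0,u,u_-}$, only decreases it; on $C$ the lower bound $X_t\ge u_-(t)$ together with $\tilde\gamma\ge 0$ gives $\sprod{X,\tilde\gamma}\ge \Theta(\tilde\gamma,u_-)$, and \eqref{eq:lower} follows.

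The argument is essentially formal once the change of measure is in place, so there is no deep obstacle; the points that genuinely require care are the well-definedness and measurability of the functionals $G(X)$ and of the random variable $\sprod{X,\tilde\gamma}$ — both handled by path-continuity and the realization of $X$ as a centered Gaussian element of $C_0(\T;\T_0)$ — and keeping the direction of the Cameron--Martin shift consistent throughout. The one structural ingredient is the sign condition (G1): it is precisely what converts the pathwise inequality into an inequality for $\sprod{X,\tilde\gamma}$ after integration, while (G2) and (G3) supply exactly the slack needed to close the upper and lower estimates respectively.
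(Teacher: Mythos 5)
Your proposal is correct and follows essentially the same route as the paper: the identical Cameron--Martin change of measure with density $\mathcal{E}_X(\tilde\gamma)$, the same combination of (G1) with the pathwise event inequality and (G2) to bound $\sprod{X,\tilde\gamma}\le\Theta(\tilde\gamma,u)$ in Part~1, and the same monotonicity via (G3) followed by restriction to the corridor event $\set{\forall t\ u_-(t)\le X_t\le u(t)}$ in Part~2. The only difference is cosmetic (you apply the pathwise bound before invoking (G2), the paper does the reverse), so nothing further is needed.
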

\begin{proof} 1. 
	Using Lemma ~\ref{thm:cm-general} we have
	\begin{equation}\label{eq:cameron-martin}
	\begin{gathered}
	P_{f,u} = \ex{\ind{\forall t\in\T\ \ X_t+f(t)\le u(t)}} =
	\ex{\ind{\forall t\in\T\ X_t+f(t)-\tilde f(t)\le u(t)} \frac{d\pr^{
				\tilde f}}{d\pr}}\\
	= \ex{\ind{\forall t\in\T\ X_t+f(t)-\tilde f(t)\le u(t)} \exp \Bigl  \{ - \frac{1}{2} \bnorm{ \tilde f}_{\HK}^2 + \sprod{X,\tilde \gamma} \Bigr \}}.
	\end{gathered}
	\end{equation}
	Note that
	\begin{equation} \label{eq:integral-estimate}
	\sprod{X,\tilde \gamma} \le  \sprod{X+f-\tilde f,\tilde \gamma} = \int_{\T_1} \bigl(X_t + f(t)-\tilde f(t)\bigr)\tilde \gamma(dt) \le \int_{\T_1} u(t)\tilde \gamma(dt) = \Theta(\tilde \gamma,u)
	\end{equation}
	on $\set{{\forall t\in\T\ \ X_t+f(t)-\tilde f(t)\le u(t)}}$	thanks to (G1) and (G2). Thus, we get
	\begin{gather*}
	P_{f,u} \le \ex{\ind{\forall t\in\T\ \ X_t+f-\tilde f\le u(t)} \exp \Bigl  \{ - \frac{1}{2} \norm{ \tilde f}_{\mathbb{H}_X}^2 + \Theta(\tilde \gamma, u) \Bigr \}}\\
	= P_{f-\tilde f,u} \exp \Bigl  \{ - \frac{1}{2} \norm{ \tilde f}_{\mathbb{H}_X}^2 + \Theta(\tilde \gamma, u) \Bigr \}
	\end{gather*}
	establishing  the claim.
	
	2.  From assumption (G3), namely $\tilde f\ge f$,  we obtain similarly to \eqref{eq:cameron-martin}
	\begin{gather*}
	P_{f,u} = \ex{\ind{\forall t\in\T\ \ X_t+f(t)\le u(t)}} \ge  \ex{\ind{\forall t\in\T\ X_t+\tilde f(t)\le u(t)}}\\ = \ex{\ind{\forall t\in\T\ X_t\le u(t)}\exp \Bigl \{- \frac{1}{2}\bnorm{\tilde f}_{\HK}^2 + \sprod{X,\tilde \gamma}}\\
	\ge \ex{\ind{\forall t\in\T\ u_{-}(t)\le X_t\le u(t)}\exp \Bigl \{- \frac{1}{2}\bnorm{\tilde f}_{\HK}^2 + \sprod{X,\tilde \gamma} }.
	\end{gather*}
	Also, similarly to \eqref{eq:integral-estimate}, we obtain
	$$
	\sprod{X,\tilde \gamma} \ge   \Theta(\tilde \gamma, u_-)
	$$
	on $\set{\forall t\in\T\ u_{-}(t)\le X_t\le u(t)}$, whence
	\begin{gather*}
	P_{f,u} \ge P_{0,u,u_{-}} \exp \Bigl  \{ - \frac{1}{2} \norm{ \tilde f}_{\mathbb{H}_X}^2 + \Theta(\tilde \gamma, u_-) \Bigr \}.
	\end{gather*}
\end{proof}
\begin{remark}
	From (G1) and (G3) it follows that $\sprod{f-\tilde f,\tilde \gamma}\le 0$, so (G2) holds as an equality whenever (G1)--(G3) are satisfied simultaneously for some $\tilde \gamma$ and $\tilde f$ (not necessarily equal to $\mathcal R \tilde \gamma$). Moreover, in this case $\tilde \gamma$ and $\tilde f-f$ must be ``orthogonal'' in the sense that $\tilde \gamma$ is supported by the set $\set{t\in\T_1: \tilde f(t) - f(t)=0}$. 
\end{remark}

Now we turn to the question of identification of $\tilde f$ and $\tilde \gamma$ satisfying (G1)--(G3). To this end, for any $f\in \HK$, consider the following minimization problem:
\begin{equation}\label{eq:minprob}
\text{minimize }\norm{h}_{\HK}\text{ for all }h\in \HK, h\ge f;
\end{equation}
here the comparison is understood, as usual, in the pointwise sense, i.e. $h\ge f$ means $h(t)\ge f(t)$ for all $t\in \T$.

\begin{lemma} \label{lemma:closed}
	The set $C_f := \set{h\in \mathbb H_X \mid \forall t\in\T\ \ h(t)\ge f(t) }$ is a closed set in $\mathcal{H}_X$.
\end{lemma}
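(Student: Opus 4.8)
The goal is to show that $C_f = \set{h\in \HK \mid \forall t\in\T\ h(t)\ge f(t)}$ is closed in $\HK$. The plan is to take a sequence $h_n\in C_f$ converging to some $h$ in the $\HK$-norm and to show that $h\in C_f$, i.e.\ that $h(t)\ge f(t)$ for every $t\in\T$. The key point is that convergence in the RKHS norm is \emph{stronger} than pointwise convergence, so that the pointwise inequalities $h_n(t)\ge f(t)$ pass to the limit. Concretely, I would exploit the reproducing property: for every $t\in\T_1$, the evaluation functional is represented by $R(t,\cdot)$ via the isometry $\mathcal R$, so $h(t) = \pprod{h, R(t,\cdot)}_{\HK}$, and hence
\begin{equation*}
\abs{h_n(t) - h(t)} = \abs{\pprod{h_n - h, R(t,\cdot)}_{\HK}} \le \bnorm{h_n - h}_{\HK}\, \bnorm{R(t,\cdot)}_{\HK}.
\end{equation*}

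The next step is to bound $\bnorm{R(t,\cdot)}_{\HK}$. By \eqref{covR} we have $\bnorm{R(t,\cdot)}_{\HK}^2 = R(t,t)$, which is finite (and continuous in $t$, hence bounded on the compact space $\T$). Therefore $\abs{h_n(t)-h(t)} \le \bnorm{h_n-h}_{\HK}\sqrt{R(t,t)} \to 0$, so $h_n(t)\to h(t)$ for each fixed $t$. Since $h_n(t)\ge f(t)$ for all $n$ by assumption, taking the limit gives $h(t)\ge f(t)$. As $t\in\T$ was arbitrary, $h\ge f$ pointwise, so $h\in C_f$. This establishes closedness in the sequential (and hence, in a metric space, the topological) sense.

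One technicality to handle carefully is the role of the zero set $\T_0$: for $t\in\T_0$ we have $R(t,t)=0$, so every element of $\HK\subset C_0(\T;\T_0)$ vanishes at such $t$, and the inequality $h(t)\ge f(t)$ at those points reduces to $0\ge f(t)$, which is a fixed constraint independent of $n$ and passes trivially to the limit (indeed $f\in\HK$ also vanishes on $\T_0$, so it reads $0\ge 0$). Thus the evaluation-functional argument only needs the nondegenerate points $t\in\T_1$, and on $\T_0$ there is nothing to check.

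I do not expect a serious obstacle here; the lemma is essentially the standard fact that RKHS convergence dominates pointwise convergence, combined with the elementary observation that a pointwise limit of functions dominating $f$ still dominates $f$. The only point requiring a little care is the clean identification of the evaluation functional with $R(t,\cdot)$ through the isometry $\mathcal R\colon \HKX\to\HK$ and the duality extension described before \eqref{covR}, so that the Cauchy--Schwarz estimate above is fully justified for elements of $\HK$ (not merely for finite linear combinations of the $R(t,\cdot)$). This is guaranteed by the continuity extension of the pairing already set up in the text, so the argument goes through without additional hypotheses.
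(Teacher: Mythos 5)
Your proof is correct, but it takes a genuinely different route from the paper. The paper's argument is soft-analytic: it views the identity map $\operatorname{id}\colon \HK\to C_0(\T;\T_0)$ as a closed operator between Banach spaces, invokes the closed graph theorem to conclude this embedding is continuous, and then observes that $C_f$, being closed in the supremum norm (a pointwise inequality against a fixed $f$ survives uniform limits), is therefore closed in $\HK$. You instead argue directly through the reproducing property: the estimate $\abs{h(t)}\le \bnorm{h}_{\HK}\sqrt{R(t,t)}$ for $t\in\T_1$, together with $h\equiv 0$ on $\T_0$, shows that $\HK$-convergence dominates pointwise convergence, so the constraints $h_n(t)\ge f(t)$ pass to the limit. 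Your route is more elementary and quantitative: it avoids Banach-space machinery, needs only pointwise finiteness of $R(t,t)$ (not compactness of $\T$ or continuity of $R$), and in fact makes explicit the very ingredient the paper's proof leaves implicit --- to verify that the graph of $\operatorname{id}$ is closed one must know that an $\HK$-limit and a uniform limit of the same sequence coincide as functions, which is exactly the continuity of evaluation functionals you establish via Cauchy--Schwarz and \eqref{covR}. What the paper's approach buys in exchange is brevity and a reusable byproduct, namely the continuity of the embedding $\HK\hookrightarrow C_0(\T;\T_0)$ as a bounded operator, with the bound $\sup_{t\in\T}\sqrt{R(t,t)}$ that your computation also yields. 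Your handling of the degenerate set $\T_0$ and of the extension of the pairing to all of $\HK$ (not just finite combinations of kernels $R(t,\cdot)$) is careful and matches the setup the paper establishes before \eqref{covR}, so no gap remains.
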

\begin{proof}
	Since $\mathbb H_X$ consists of continuous functions, we can consider the identity operator $\operatorname{id}_{\mathbb H_X}$ as acting from $\mathbb H_X$ to $C_0(\T;\T_0)$. It is obviously closed, so by the continuous graph theorem it is continuous. Consequently, the set $C_f$, which is closed in $C_0(\T;\T_0)$, is also closed in $\mathbb H_X$. 
\end{proof}

Since the set $C_f$ is convex and closed in $\mathbb{H}_X$, then by \cite[Chapter 1]{edwards}, there exists a unique element $\tilde f$ solving the  minimization problem \eqref{eq:minprob}. Moreover, the following proposition holds.

\begin{proposition} \label{prop:G2}
	The solution to the minimization problem \eqref{eq:minprob} satisfies
	\begin{equation}\label{eq:G2}
	\bprod{f-\tilde f,\tilde f}_{\HK} = 0.
	\end{equation}
\end{proposition}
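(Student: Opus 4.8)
The plan is to characterize $\tilde f$ through first-order optimality and then exploit the additive structure of $C_f$. Since $C_f$ is convex and, by Lemma~\ref{lemma:closed}, closed in the Hilbert space $\HK$, the minimizer $\tilde f$ of $\norm{\cdot}_{\HK}$ over $C_f$ is exactly the metric projection of the origin onto $C_f$. The Hilbert-space projection theorem then yields the variational inequality
$$\bprod{\tilde f,\, h - \tilde f}_{\HK} \ge 0 \quad \text{for all } h \in C_f.$$
If one prefers to avoid invoking the projection theorem directly, the same inequality follows by noting that for any $h\in C_f$ the segment $(1-t)\tilde f + t h$ lies in $C_f$ for $t\in[0,1]$ by convexity, so that $t\mapsto \norm{(1-t)\tilde f + t h}_{\HK}^2$ is minimized at $t=0$ and its right derivative there, namely $2\bprod{\tilde f,\, h-\tilde f}_{\HK}$, is nonnegative.

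The crux is to feed two carefully chosen competitors into this inequality, each relying only on the defining constraint $h\ge f$ together with membership in $\HK$. First, $f$ itself belongs to $C_f$ (trivially $f\ge f$, and $f\in\HK$ by assumption), and substituting $h=f$ gives $\bprod{\tilde f,\, f-\tilde f}_{\HK}\ge 0$. Second, $2\tilde f - f\in\HK$ as a linear combination of elements of $\HK$, and it satisfies $(2\tilde f - f) - f = 2(\tilde f - f)\ge 0$ because $\tilde f\ge f$ (which holds since $\tilde f\in C_f$); hence $2\tilde f - f\in C_f$, and substituting $h=2\tilde f-f$ gives $\bprod{\tilde f,\, \tilde f - f}_{\HK}\ge 0$, that is, $\bprod{\tilde f,\, f-\tilde f}_{\HK}\le 0$. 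Combining the two bounds forces $\bprod{f-\tilde f,\,\tilde f}_{\HK}=0$, as claimed.

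There is no serious obstacle: the argument is an algebraic sandwich built on the projection inequality, and the only points needing attention are routine, namely that $\HK$ is genuinely a Hilbert space (so the projection theorem applies) and that the two competitors $f$ and $2\tilde f - f$ both lie in $\HK$ and dominate $f$ pointwise. The one genuinely useful idea worth isolating is that $C_f$ is a translate $C_f = f + P$ of the convex cone $P=\set{g\in\HK: g\ge 0}$; this is precisely what allows moving from $\tilde f$ both toward $f$ (via $h=f$) and equally far in the opposite direction (via $h=2\tilde f-f$) while staying inside $C_f$, which is exactly what the two-sided bound needs. Equivalently, the conclusion is the familiar orthogonality $\tilde f \perp (\tilde f - f)$ in $\HK$ that arises when projecting a point onto a translate of a cone.
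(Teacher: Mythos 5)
Your proof is correct and follows essentially the same route as the paper: both characterize $\tilde f$ via the variational inequality for the metric projection of $0$ onto the closed convex set $C_f$, and then plug in the two competitors $h=f$ and $h=2\tilde f-f$ to obtain the two-sided bound forcing $\bprod{f-\tilde f,\tilde f}_{\HK}=0$. Your extra remarks (the elementary derivation of the variational inequality via the segment $(1-t)\tilde f+th$, and the observation that $C_f=f+P$ is a translated cone) are sound but add nothing beyond the paper's argument.
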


\begin{proof}
	Since $\tilde f$ is a metric projection of $0$ to the set $C_f$, 
	the solution $\tilde f$ is characterized by the well-known variational inequality (see e.g., \cite[Lemma]{CG59} or \cite[Lemma 2.2]{Noor}) 
	\begin{equation}\label{eq:projection}
	\bnorm{\tilde f}^2_{\HK}\le  \bprod{\tilde f,h}_{\HK}\quad  \forall h\in C_f.
	\end{equation}
	Plugging $h=f$ to \eqref{eq:projection}, we get
	$\bprod{f-\tilde f,\tilde f}_{\HK}\ge 0$; plugging $h=2\tilde f -f$, we obtain $\bprod{f-\tilde f,\tilde f}_{\HK}\le 0$, establishing the claim.
\end{proof}

In other words, any $\tilde \gamma$ such that $\mathcal R \tilde \gamma = \tilde f$ satisfies (G2) and (G3). To ensure the validity of (G1), we need an additional assumption. 

\begin{itemize}
	\item[(P)] If $g\in \mathcal H_X$ is such that $\sprod{f,g} \ge 0$ for all $f\in \mathbb H^+_X:=\set{f\in \mathbb H_X\mid f\ge 0}$, then $g\in M^+(\T_1)$.
\end{itemize}

\begin{proposition}\label{prop:G1-G3}
	Under the assumption (P), the solution $\tilde f=\mathcal R \tilde \gamma$ to the minimization problem \eqref{eq:minprob} satisfies (G1)--(G3).
\end{proposition}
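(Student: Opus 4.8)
The plan is to dispose of (G3) and (G2) immediately and then focus all the work on (G1), which is the only condition that genuinely uses the extra hypothesis (P). Condition (G3) is automatic: by construction $\tilde f$ is the minimizer in \eqref{eq:minprob}, hence $\tilde f\in C_f$, which is exactly the pointwise inequality $\tilde f\ge f$. For (G2) I would invoke Proposition~\ref{prop:G2}, which gives $\bprod{f-\tilde f,\tilde f}_{\HK}=0$. Writing $f=\mathcal R\mu_f$ and $\tilde f=\mathcal R\tilde\gamma$ and using the isometry between $\HKX$ and $\HK$ together with the defining relation $\sprod{\mathcal R\mu_1,\mu_2}=\bprod{\mu_1,\mu_2}_{\HKX}$, one checks that $\sprod{f-\tilde f,\tilde\gamma}=\bprod{f-\tilde f,\tilde f}_{\HK}=0\ge 0$, so (G2) holds, in fact with equality.

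The substance of the proof is (G1), namely showing that the element $\tilde\gamma\in\HKX$ determined by $\mathcal R\tilde\gamma=\tilde f$ is actually a non-negative finite measure on $\T_1$. Here I would exploit assumption (P): it suffices to verify that $\sprod{h,\tilde\gamma}\ge 0$ for every $h\in\HK^+$. The key step is the variational inequality \eqref{eq:projection} characterizing $\tilde f$ as the metric projection of $0$ onto $C_f$. Fix any $h\in\HK$ with $h\ge 0$. Since $\tilde f\ge f$ and $h\ge 0$, the function $\tilde f+h$ again satisfies $\tilde f+h\ge f$ and lies in $\HK$, so $\tilde f+h\in C_f$. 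Feeding this admissible element into \eqref{eq:projection} gives
$$
\bnorm{\tilde f}_{\HK}^2\le \bprod{\tilde f,\tilde f+h}_{\HK}=\bnorm{\tilde f}_{\HK}^2+\bprod{\tilde f,h}_{\HK},
$$
whence $\bprod{\tilde f,h}_{\HK}\ge 0$. Translating via the isometry exactly as for (G2), namely $\sprod{h,\tilde\gamma}=\bprod{h,\tilde f}_{\HK}$, we obtain $\sprod{h,\tilde\gamma}\ge 0$ for all $h\in\HK^+$, and applying (P) to $g=\tilde\gamma$ yields $\tilde\gamma\in M^+(\T_1)$, which is (G1).

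I expect the only point requiring care to be the bookkeeping between the duality pairing $\sprod{\cdot,\cdot}$ (extended from $C_0(\T;\T_0)\times M(\T_1)$ to $\HKX$) and the RKHS inner product $\bprod{\cdot,\cdot}_{\HK}$ on $\HK$; these are identified through the isometry $\mathcal R$, and one must keep straight which object is a function and which is a (generalized) measure. There is no real analytic obstacle beyond this: once the identification $\sprod{h,\tilde\gamma}=\bprod{h,\tilde f}_{\HK}$ is in place, the variational inequality does all the work, and the role of hypothesis (P) is precisely to upgrade the resulting positivity of the linear functional $h\mapsto\bprod{\tilde f,h}_{\HK}$ on the cone $\HK^+$ into genuine measure-theoretic non-negativity of $\tilde\gamma$.
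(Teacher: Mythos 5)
Your proof is correct and follows essentially the same route as the paper: (G3) from $\tilde f\in C_f$, (G2) from Proposition~\ref{prop:G2}, and (G1) by feeding the admissible element $\tilde f+h$, $h\in\mathbb{H}_X^+$, into the variational inequality \eqref{eq:projection} to get $\bprod{\tilde f,h}_{\HK}=\sprod{h,\tilde\gamma}\ge 0$ and then invoking (P). The only difference is expository: you spell out the duality/isometry bookkeeping that the paper leaves implicit, which is a reasonable addition but not a new idea.
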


\begin{proof}
	If $k\in \mathbb{H}^+_X$,  then $\tilde f+k\in C_f$,  hence  by \eqref{eq:projection}
	$$
	\bnorm{\tilde f}^2_{\HK}\le 	 \bprod{\tilde f,\tilde f+k}_{\HK} =  	\bnorm{\tilde f}^2_{\HK}+ \bprod{\tilde f,k}_{\HK},
	$$
	whence
	$$
	\sprod{k,\tilde \gamma} = \bprod{k,\tilde f}_{\HK} \ge 0.
	$$
	Since $k\in \mathbb{H}^+_X$ is arbitrary, then $\tilde \gamma\in M^+(\T_1)$ by assumption (P), i.e.\ we have (G1). (G2) follows from Proposition~\ref{prop:G2}, whereas  (G3) follows from the definition of $\tilde f$.
\end{proof}

\subsection{\CE{Asymptotics}}

In this section we derive expansions for $\log P_{\cY f, u}$ as  $\cY$ tends to infinity.  
We will need the following additional assumption.
\begin{itemize}
	\item[(D)] $\mathcal R$ is injective on $M(\T_1)$.
\end{itemize}
\begin{remark}
	Condition (D) is equivalent to the distribution of $X$ having full support, i.e.\  the support of the distribution of $X$ coincides with $C_0(\T;\T_0)$. Indeed, it is well known (see e.g.\ \cite[Lemma 5.1]{VZ08}) that the support of distribution of $X$ is the closure of $\mathcal R M(\T_1)$. If the latter were not $C_0(\T;\T_0)$, then by Hanh--Banach theorem there would exist non-zero $\gamma\in M(\T_1)$ such that $\sprod{f,\gamma} = 0$ for all $f\in \mathcal R M(\T_1)$. In particular, $\sprod{\mathcal R\gamma,\gamma}=0$, which would contradict the injectivity. On the other hand, if $\mathcal R\gamma = 0$ for some non-zero $\gamma$, then $\sprod{f,\gamma} = 0$ for all $f\in \mathcal R M(\T_1)$, hence, for all $f$ from the support of $X$, which then cannot be full. 
	
	We have chosen the injectivity assumption because we believe it is easier to verify than the full support property.
\end{remark}

Now we state the assumptions on the boundary function $u$. 

\begin{itemize} \item[(U)]  There exists a sequence $(u_n,n\ge 1)$ of continuous functions such that 
	\begin{itemize}
		\item[\rm 1)] $u_n(t)\uparrow u(t)$, $n\to \CE{+}\infty$, for all $t\in\T_1$;
		\item[\rm 2)] $P_{0,u,u_n} = \pr\big(\forall t\in\T \  u_{n}(t)\le X_t\le u(t)\big)>0$ for all $n\ge 1$.
	\end{itemize}
\end{itemize}
\begin{remark}\label{prop:U}
	Under assumption (D), a sufficient condition for a lower semicontinuous $u\colon \T\to \R$ to satisfy (U) is that  $u(t)>0$ for all $t\in\T_0$.
	Indeed, in this case for any $u_-\in C(\T)$ such that $u_-(t)<0$ for all $t\in \T_0$ and $u_-(t)<u(t)$ for all $t\in\T$, the set $$
	A_{u,u_-} = \set{g\in C_0(\T;\T_0) \mid \forall t\in\T\ \ u_-(t)<g(t)< u(t)}
	$$
	is non-empty and open in $C_0(\T;\T_0)$. Therefore, since the support of distribution of $X$ is $C_0(\T;\T_0)$, then we have 
	$$P_{0,u,u_-} = \pr(X \in A_{u,u_-})>0.$$
	Consequently, (U) holds for any sequence of continuous functions $u_n\in C(\T)$ such that $u_n(t)<0$, $t\in T_0$,  and $u_n(t)\uparrow u(t)$, $n\to\CE{+}\infty$, for any $t\in \T_1$.
	
	We believe that (U) holds whenever $u(t)\ge 0$ for $t\in \T_0$ and $P_{0,u}>0$. However, the above argument fails, as the set $A_{u,u_-}$ can be empty. Considering the set $$B_{u,u_-}:=\set{g\in C_0(\T;\T_0) \mid \forall t\in\T_1\ \ u_-(t)\le g(t)\le u(t)}$$ will not help, as it is not open in general. One could consider a finer topology to overcome this problem, but then the dual space would be larger and perhaps not as tractable as $M(\T_1)$. 
\end{remark}

\begin{theorem}\label{thm:assym-gen}
	Assume that (D) holds, $f \in \mathbb H_X$ and let $u\colon \T\to\R$ be a lower semicontinuous function satisfying (U).
	If there exists $\tilde f = \mathcal R\tilde \gamma\in \mathbb H_X$ satisfying  (G1)--(G3), then
	\begin{equation}\label{eq:P_cf-asymp}
	\begin{aligned}
	\log P_{\cY f,u} &= - \frac{\cY^2}{2}\norm{ \CE{\tilde \gamma}}^2_{\mathcal H_X} + \cY\, \Theta(\CE{\tilde \gamma}, u) +o(\cY)\\ &\CE{= - \frac{\cY^2}{2}\norm{ \tilde f}^2_{\HK} + \cY\, \Theta(\tilde \gamma, u) +o(\cY)},
	\quad \cY\to+\infty.
	\end{aligned}
	\end{equation}
\end{theorem}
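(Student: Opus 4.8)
The plan is to feed the scaled drift $cf$ into both parts of Theorem~\ref{thm:general} and then let $c\to+\infty$. The starting observation is that the pair $(c\tilde\gamma,c\tilde f)$ plays for $cf$ exactly the role that $(\tilde\gamma,\tilde f)$ plays for $f$: indeed $c\tilde f=\mathcal R(c\tilde\gamma)$, and since $\tilde\gamma\in M^+(\T_1)$, $\sprod{f-\tilde f,\tilde\gamma}\ge0$ and $\tilde f\ge f$, multiplication by $c>0$ preserves all three, so $(c\tilde\gamma,c\tilde f)$ satisfies (G1)--(G3) relative to the drift $cf$. Two elementary identities will be used throughout: the isometry $\mathcal R\colon\HKX\to\HK$ gives $\bnorm{c\tilde f}_{\HK}^2=c^2\bnorm{\tilde f}_{\HK}^2=c^2\norm{\tilde\gamma}_{\HKX}^2$, and $\Theta(c\tilde\gamma,v)=c\,\Theta(\tilde\gamma,v)$ for any $v$.

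For the upper bound I would apply part~1 of Theorem~\ref{thm:general} to the drift $cf$ with the pair $(c\tilde\gamma,c\tilde f)$, obtaining
\begin{equation*}
P_{cf,u}\le P_{c(f-\tilde f),u}\exp\Bigl\{-\tfrac{c^2}{2}\norm{\tilde\gamma}_{\HKX}^2+c\,\Theta(\tilde\gamma,u)\Bigr\}.
\end{equation*}
The key simplification is that (G3) forces $f-\tilde f\le0$, hence $c(f-\tilde f)\le0$ and the non-crossing probability $P_{c(f-\tilde f),u}$ is bounded by $1$. Thus already
\begin{equation*}
\log P_{cf,u}\le -\tfrac{c^2}{2}\norm{\tilde\gamma}_{\HKX}^2+c\,\Theta(\tilde\gamma,u),
\end{equation*}
with no error term, which gives $\limsup_{c\to\infty}c^{-1}\bigl(\log P_{cf,u}+\tfrac{c^2}{2}\norm{\tilde\gamma}_{\HKX}^2\bigr)\le\Theta(\tilde\gamma,u)$.

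For the lower bound I would use the approximating sequence $(u_n)$ supplied by (U). Fixing $n$ and applying part~2 of Theorem~\ref{thm:general} to the drift $cf$ with $(c\tilde\gamma,c\tilde f)$ and $u_-=u_n$ (replacing, if necessary, $u_n$ by $u_n-1/n$ to secure the strict inequality $u_n<u$ on all of $\T$ demanded there; this keeps the sequence increasing, preserves $u_n\uparrow u$ on $\T_1$, and only enlarges the event defining $P_{0,u,u_n}$, so positivity is retained), I obtain
\begin{equation*}
\log P_{cf,u}\ge \log P_{0,u,u_n}-\tfrac{c^2}{2}\norm{\tilde\gamma}_{\HKX}^2+c\,\Theta(\tilde\gamma,u_n).
\end{equation*}
Dividing by $c$ and letting $c\to\infty$ with $n$ fixed annihilates the constant $\log P_{0,u,u_n}$ and leaves $\liminf_{c\to\infty}c^{-1}\bigl(\log P_{cf,u}+\tfrac{c^2}{2}\norm{\tilde\gamma}_{\HKX}^2\bigr)\ge\Theta(\tilde\gamma,u_n)$. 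Since $u_n\uparrow u$ on $\T_1$ and $\tilde\gamma\in M^+(\T_1)$ is finite, the monotone convergence theorem yields $\Theta(\tilde\gamma,u_n)=\int_{\T_1}u_n\,d\tilde\gamma\to\int_{\T_1}u\,d\tilde\gamma=\Theta(\tilde\gamma,u)$; letting $n\to\infty$ then matches the upper bound and proves \eqref{eq:P_cf-asymp}.

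The main obstacle is the lower bound, and specifically the handling of the double limit. One cannot take $u_-=u$ directly — the event $\set{u\le X\le u}$ is typically null, so $P_{0,u,u}=0$ and the bound would be vacuous — so one must pass through the strictly dominated boundaries $u_n$ and recover $\Theta(\tilde\gamma,u)$ only at the end by monotone convergence. Getting the order of limits right, first $c\to\infty$ for fixed $n$ and then $n\to\infty$, is precisely what renders the $c$-independent constant $\log P_{0,u,u_n}$ harmless, and it is exactly here that assumption (U) — the existence of continuous approximants with positive two-sided non-crossing probability — is indispensable.
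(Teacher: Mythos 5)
Your proposal is correct and follows essentially the same route as the paper: apply both parts of Theorem~\ref{thm:general} to the scaled drift $cf$ with the scaled pair $(c\tilde\gamma, c\tilde f)$, bound $\log P_{c(f-\tilde f),u}\le 0$ for the upper estimate, and use the (U)-sequence for the lower one. The only (harmless) differences are that you take iterated limits (first $c\to\infty$ at fixed $n$, then $n\to\infty$ via monotone convergence) where the paper uses a diagonal sequence $n(c)$ with dominated convergence, and that you explicitly repair the strict inequality $u_-<u$ required by part~2 by passing to $u_n-1/n$ --- a detail the paper glosses over.
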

\begin{remark}  \label{remXh}
	It follows from \eqref{eq:P_cf-asymp} that all $\tilde \gamma\in \mathcal H_X$ satisfying (G1)--(G3) must have equal norms. Therefore, since the set of such functions is convex, they all must coincide in $\mathcal H_X$ implying that such $\tilde\gamma\in \mathcal H_X$ is unique. 
\end{remark}
\begin{proof}
	Denote the right-hand side of \eqref{eq:P_cf-asymp} by $r(\cY,\tilde \gamma,u)$. Since $P_{0,u}>0$, the inequality
	\eqref{eq:noncrossingprobupperbound-verygeneral} yields 
	$$
	\limsup_{\cY\to+\infty} \big(\log P_{\cY f,u} - r(\cY,\tilde \gamma,u)\big) \le \limsup_{\cY\to+\infty} \log P_{\cY(f-\mathcal R \tilde \gamma),u} \le 0,
	$$
	so it remains to establish the lower bound.
	
	Next, take the sequence $(u_n,n\ge 1)$ satisfying (U). It is clear that one can choose positive integers \CE{$(k=k(\cY),\cY\ge 0)$} growing to $\CE{+}\infty$ as $\cY \to +\infty$ sufficiently slowly so that $\cY^{-1}\log P_{0,u,u_{k}}\to 0$, $\cY\to+\infty$. Then for any $n\ge 1$, by  \eqref{eq:lower} we get
	$$
	\liminf_{\cY\to+\infty} \cY^{-1}\big(\log P_{\cY f,u} - r(\cY,\tilde \gamma,u_{k})\big) \ge \liminf_{\cY\to+\infty} \cY^{-1}\log P_{0,u,u_{k}} =0.
	$$
	Consequently,
	\begin{gather*}
	\liminf_{\cY\to+\infty} \cY^{-1}\big(\log P_{\cY f,u} - r(\cY,\tilde\gamma,u)\big)\\
	\ge \liminf_{\cY\to+\infty} \cY^{-1}\big(\log P_{\cY f,u} - r(\cY,\tilde\gamma,u_{k})\big) + \liminf_{\cY\to+\infty} \cY^{-1}\big(r(\cY,\tilde\gamma,u_{k}) - r(\cY,\tilde \gamma,u)\big) \\
	\ge  \liminf_{\cY\to+\infty} \bigl(\Theta(\tilde \gamma,u_{k})-\Theta(\tilde \gamma,u)\bigr)= \liminf_{\cY\to+\infty}  \int_{\T_1}  \big(u_{k}(t)-u(t)\big)\tilde \gamma(dt).
	\end{gather*}
	Thanks to the dominated convergence $\lim_{\cY \to + \infty} 
	\int_{\T_1}  \big(u_{k}(t)-u(t)\big)\tilde\gamma(dt)= 0$ implying 
	$$
	\liminf_{\cY\to+\infty} \cY^{-1}\big(\log P_{\cY f,u} - r(\cY,\tilde\gamma,u)\big) \ge  0,
	$$
	hence the proof is complete. 
\end{proof}
%
%
%
%
%
%
%

We are now ready to state the main result of this section.
\begin{theorem}\label{thm:main}
	Let  (D) and (P) hold, $ f\in\mathbb H_X$  and let $u$ be a lower semicontinuous function satisfying (U). If further $\tilde\gamma$ is the projection of $0$ to the set $C_f$, then \eqref{eq:P_cf-asymp} holds.
\end{theorem}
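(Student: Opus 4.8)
The plan is to deduce Theorem~\ref{thm:main} from the two preceding results, Proposition~\ref{prop:G1-G3} and Theorem~\ref{thm:assym-gen}; the statement is, in effect, the observation that under (P) the metric projection onto $C_f$ supplies precisely the data required by the asymptotic theorem. The only interpretive point to settle at the outset is that ``$\tilde\gamma$ is the projection of $0$ to $C_f$'' should be read through the isometry $\mathcal R$: since $C_f\subset \HK$, the genuine metric projection is an element $\tilde f\in\HK$, and $\tilde\gamma:=\mathcal R^{-1}\tilde f\in\mathcal H_X$ is its preimage, which is well defined because $\mathcal R$ is an isometry of $\HKX$ onto $\HK$.

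First I would record existence and uniqueness of $\tilde f$. By Lemma~\ref{lemma:closed} the set $C_f$ is closed, and it is plainly convex (and nonempty, as $f\in C_f$), so the metric projection of $0$ onto $C_f$ in the Hilbert space $\HK$ exists and is unique. Setting $\tilde f$ to be this projection and $\tilde\gamma=\mathcal R^{-1}\tilde f$, I obtain $\tilde f=\mathcal R\tilde\gamma$ with $\norm{\tilde\gamma}_{\HKX}=\norm{\tilde f}_{\HK}$, so the leading term of \eqref{eq:P_cf-asymp} may be written either way.

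Second, I would verify (G1)--(G3) for this pair, which is exactly the content of Proposition~\ref{prop:G1-G3}: (G3) holds because $\tilde f\in C_f$ means $\tilde f\ge f$; (G2), i.e.\ $\sprod{f-\tilde f,\tilde\gamma}=\bprod{f-\tilde f,\tilde f}_{\HK}\ge 0$, is Proposition~\ref{prop:G2}, obtained by testing the variational inequality \eqref{eq:projection} against $h=f$ and $h=2\tilde f-f$; and (G1), i.e.\ $\tilde\gamma\in M^+(\T_1)$, follows by testing \eqref{eq:projection} against $\tilde f+k$ for arbitrary $k\in\mathbb H^+_X$ to get $\sprod{k,\tilde\gamma}=\bprod{k,\tilde f}_{\HK}\ge 0$, and then invoking (P). The decisive step is this last one: (P) is what upgrades $\tilde\gamma$ from a possibly abstract element of the completion $\mathcal H_X$ to an honest finite non-negative measure on $\T_1$, so that the functional $\Theta(\tilde\gamma,u)=\int_{\T_1}u\,\tilde\gamma(dt)$ is meaningful.

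Finally, with $\tilde f=\mathcal R\tilde\gamma$ satisfying (G1)--(G3) in place, all hypotheses of Theorem~\ref{thm:assym-gen} are met --- namely (D), $f\in\HK$, lower semicontinuity of $u$ with (U), and the existence of such $\tilde f$ --- and that theorem yields \eqref{eq:P_cf-asymp} verbatim. I do not expect any genuinely new obstacle at this stage, since the analytic core (matching the Cameron--Martin upper bound of Theorem~\ref{thm:general} with the lower bound along a slowly growing approximating sequence $u_{n(c)}$, controlled via dominated convergence) was already carried out inside Theorem~\ref{thm:assym-gen}. The only point that truly requires care, and the reason (P) cannot be dispensed with, is guaranteeing that the projection admits a representer in $M^+(\T_1)$ rather than merely in $\mathcal H_X$.
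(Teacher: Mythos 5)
Your proposal is correct and follows essentially the same route as the paper, whose proof of Theorem~\ref{thm:main} is precisely the one-line combination of Proposition~\ref{prop:G1-G3} (verifying (G1)--(G3) for the projection) with Theorem~\ref{thm:assym-gen}; you have merely unpacked the details of those ingredients, including the harmless notational point that the projection is $\tilde f\in\HK$ and $\tilde\gamma=\mathcal R^{-1}\tilde f\in\HKX$ is its representer. No gaps.
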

\begin{proof}
	The statement follows from Proposition~\ref{prop:G1-G3} and Theorem~\ref{thm:assym-gen}.
\end{proof}

In general, it is difficult to identify $\tilde \gamma$. But there are cases where it is possible, e.g.\ if the drift is the covariance operator applied to a non-negative measure. Namely, the following result follows from Theorem~\ref{thm:main}  immediately.
\begin{corollary}\label{cor:positive}
	Assume that  (D) holds and $u$ is a lower semicontinuous function satisfying (U). Then for any $\gamma\in M^+(\T_1)$ and $f=\mathcal R \gamma$  the asymptotic expansion \eqref{eq:P_cf-asymp} holds with $\tilde \gamma=\gamma$.
\end{corollary}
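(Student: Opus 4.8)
The plan is to bypass the projection machinery entirely and appeal directly to Theorem~\ref{thm:assym-gen}, since the hypotheses of the corollary — namely (D), lower semicontinuity of $u$, and (U) — are precisely three of the four ingredients that theorem requires. The remaining ingredient is the existence of some $\tilde f = \mathcal R\tilde\gamma\in\HK$ satisfying (G1)--(G3); and for $f = \mathcal R\gamma$ with $\gamma\in M^+(\T_1)$ the natural candidate $\tilde\gamma := \gamma$, $\tilde f := f$ makes all three conditions hold trivially. So the corollary really amounts to the observation that the minimization problem \eqref{eq:minprob} is solved without effort when the drift already lies in $\mathcal R M^+(\T_1)$.

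Concretely, I would set $\tilde\gamma := \gamma$ and $\tilde f := \mathcal R\tilde\gamma = f$, and verify (G1)--(G3) in order. First, (G1) is nothing but the standing hypothesis $\gamma\in M^+(\T_1)$. Next, since $\tilde f = f$ the difference $f - \tilde f$ vanishes identically, so $\sprod{f-\tilde f,\tilde\gamma}=0$, giving (G2) (as an equality), and $\tilde f = f\ge f$ pointwise, giving (G3). With (G1)--(G3) in hand and the other assumptions already supplied, Theorem~\ref{thm:assym-gen} yields \eqref{eq:P_cf-asymp} with this $\tilde\gamma = \gamma$, which is exactly the claim.

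The one point that warrants care — and the reason I would route through Theorem~\ref{thm:assym-gen} rather than Theorem~\ref{thm:main} — is that the corollary does \emph{not} assume (P). Here property (P) is simply not needed to exhibit a measure satisfying (G1)--(G3): one reads it off from the data. If one nonetheless wishes to confirm that $\tilde\gamma=\gamma$ is genuinely the projection of $0$ onto $C_f$ (so that the result dovetails with Theorem~\ref{thm:main}), it suffices to check the variational inequality \eqref{eq:projection} for $f$ itself: for any $h\in C_f$ write $h-f = \mathcal R\nu$ with $\nu\in\HKX$, so that
$$
\bprod{f,\,h-f}_{\HK} = \bprod{\mathcal R\gamma,\mathcal R\nu}_{\HK} = \sprod{h-f,\gamma} = \int_{\T_1}\bigl(h(t)-f(t)\bigr)\gamma(dt)\ge 0,
$$
because $h\ge f$ pointwise and $\gamma\ge 0$; this is exactly $\bnorm{f}_{\HK}^2\le\bprod{f,h}_{\HK}$ for all $h\in C_f$. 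There is thus essentially no computation and no genuine obstacle; the entire content of the corollary is recognizing that the trivial choice already meets the hypotheses of Theorem~\ref{thm:assym-gen}.
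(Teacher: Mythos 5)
Your proof is correct, and it is in substance what the corollary is about; the only divergence from the paper is in which theorem gets cited. The paper disposes of the corollary in one line, saying it ``follows from Theorem~\ref{thm:main} immediately'' --- but Theorem~\ref{thm:main} carries the extra hypothesis (P) and phrases the conclusion in terms of the projection of $0$ onto $C_f$, neither of which appears among the corollary's assumptions. You instead route through Theorem~\ref{thm:assym-gen} with the trivial choice $\tilde\gamma=\gamma$, $\tilde f=f$, checking (G1) from $\gamma\in M^+(\T_1)$ and (G2)--(G3) from $f-\tilde f\equiv 0$; this matches the corollary's stated hypotheses exactly and makes explicit that (P) plays no role here, so your derivation is, if anything, the more careful reading of what the authors must have intended. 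Your supplementary verification that $f$ is indeed the projection of $0$ onto $C_f$ is also sound: for $h\in C_f$, writing $h-f=\mathcal R\nu$ with $\nu\in\HKX$ and using the paper's extension $\pprod{\nu,\gamma}_{\HKX}=\sprod{\mathcal R\nu,\gamma}$ gives $\bprod{f,h-f}_{\HK}=\int_{\T_1}\bigl(h(t)-f(t)\bigr)\gamma(dt)\ge 0$, which is precisely the variational inequality \eqref{eq:projection}; this step is not needed for the corollary but correctly reconciles your route with Theorem~\ref{thm:main} (modulo (P), which is only needed there to guarantee (G1) for a general $f$, not to certify the projection). In short: both approaches reduce to the same observation that (G1)--(G3) hold trivially for $f=\mathcal R\gamma$, $\gamma\ge 0$; yours buys a formally cleaner deduction that avoids importing the unneeded hypothesis (P), while the paper's citation trades that precision for brevity.
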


\subsection{Locally compact parameter space}\label{sec:localcomp}

Let now the continuous centered Gaussian process $X$ be indexed by a separable metric space $\T$, which we will assume here to be non-compact, but locally compact. We want to reduce our problem to its counterpart with compact (separable) parameter set. To this end, denote by $\overline \T = \T \cup \{t_\infty\}$ the one-point compactification of $\T$. We first show that $X$ can be multiplied by some positive function so that the product vanishes at infinity. 
\begin{lemma}\label{lem:vanisher}
	There exists a continuous  function $v\colon \T \to (0,\infty)$ such that $v(t)X_t\to 0$, $t\to t_\infty$  a.s.
\end{lemma}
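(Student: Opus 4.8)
The plan is to reduce the almost-sure vanishing of $v(t)X_t$ at $t_\infty$ to a control of the suprema of $\abs{X}$ over a sequence of compact ``shells'' exhausting $\T$, and then to beat the (random) growth of these suprema by a deterministic, slowly decaying continuous weight. Since $\T$ is locally compact, separable and metric, its one-point compactification $\overline\T$ is compact and metrizable; I fix a metric $d$ on $\overline\T$ and set $r(t)=d(t,t_\infty)\in(0,D]$ for $t\in\T$, where $D=\operatorname{diam}\overline\T$, so that $r(t)\to 0$ as $t\to t_\infty$. Then $K_n:=\set{t\in\T:r(t)\ge 1/n}$ is a compact subset of $\T$ (closed in $\overline\T$ and avoiding $t_\infty$) with $K_n\subseteq\{r>1/(n+1)\}\subseteq\operatorname{int}K_{n+1}$ and $\bigcup_n K_n=\T$. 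Writing $A_n:=K_n\setminus\operatorname{int}K_{n-1}$ (with $K_0=\emptyset$), one obtains compact shells covering $\T$ with $A_n\subseteq\set{t:1/n\le r(t)\le 1/(n-1)}$ for $n\ge 2$, whose indices tend to $\infty$ along any sequence approaching $t_\infty$.

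Next I would exploit path continuity. On each compact shell $A_n$ the random variable $M_n:=\sup_{t\in A_n}\abs{X_t}$ is finite a.s.\ and measurable (the supremum may be taken over a countable dense subset of $A_n$ by continuity). Since $M_n<\infty$ a.s., for each $n$ one can choose a constant $c_n$ with $\pr(M_n>c_n)\le 2^{-n}$; replacing $c_n$ by $\max(1,c_1,\dots,c_n)$ I may assume $c_n\ge 1$ is non-decreasing without spoiling summability, so $\sum_n\pr(M_n>c_n)<\infty$. By the Borel--Cantelli lemma there is a.s.\ a random index $N$ with $M_n\le c_n$ for all $n\ge N$. This is the sole probabilistic ingredient, and it is precisely what converts pointwise finiteness of the $M_n$ into a usable a.s.\ growth bound.

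Finally I would build $v$ as a continuous non-decreasing function of $r$. Put $a_n:=1/(n c_n)$, a positive non-increasing null sequence, and define $\Phi\colon(0,D]\to(0,\infty)$ continuous and non-decreasing by setting $\Phi(1/k):=a_{k+1}$ at the nodes $s_k=1/k$, interpolating linearly between consecutive nodes, and extending by the constant $a_2$ on $(1,D]$; then $\Phi(s)\to 0$ as $s\to 0$ and $\Phi(1/(n-1))\le a_n$ for all $n\ge 2$. Setting $v:=\Phi\circ r$ gives a continuous $v\colon\T\to(0,\infty)$ with $v(t)\le a_n$ on $A_n$, since there $r(t)\le 1/(n-1)$ and $\Phi$ is non-decreasing. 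For $\omega$ in the Borel--Cantelli event, any $t\in A_n$ with $n\ge N(\omega)$ satisfies $\abs{v(t)X_t}\le a_n M_n\le a_n c_n=1/n$; as $t\to t_\infty$ the shell index tends to $\infty$, whence $v(t)X_t\to 0$ a.s. The only delicate points are topological bookkeeping: checking that the shells nest with the stated interiors and that the linear interpolation yields a genuinely positive, non-decreasing $\Phi$. Both are routine, so the main conceptual obstacle is simply arranging the Borel--Cantelli step so that a \emph{single} deterministic $v$ works simultaneously for almost every path.
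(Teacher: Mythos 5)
Your proof is correct, and its probabilistic core is identical to the paper's: exhaust $\T$ by compact sets, choose thresholds $c_n$ with $\pr(M_n>c_n)\le 2^{-n}$, apply Borel--Cantelli to get an a.s.\ eventual bound $M_n\le c_n$, and then build a deterministic continuous weight that decays fast enough relative to $c_n$. Where you genuinely diverge is in the topological implementation. The paper obtains a compact exhaustion $\T_n\subset\T_{n+1}^\circ$ from the Lindel\"of property of a separable metric space and then constructs $v$ piecewise, invoking Urysohn's lemma on each annulus $\T_{n+1}\setminus\T_n^\circ$ to interpolate between the levels $b_n=a_{n+1}^{-2}$ and gluing the pieces along the boundaries $\partial\T_n$. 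You instead metrize the one-point compactification $\overline\T$, set $r(t)=d(t,t_\infty)$, take sublevel-complements $K_n=\set{r\ge 1/n}$ as the exhaustion, and define $v=\Phi\circ r$ with a piecewise-linear monotone $\Phi$; this makes continuity and positivity of $v$ automatic and eliminates all the gluing bookkeeping, at the price of needing metrizability of $\overline\T$. That step is valid but deserves one explicit sentence: $\T$, being separable metric, is second countable, so its one-point compactification is a second countable compact Hausdorff space, hence metrizable by Urysohn's metrization theorem. (The paper's route avoids this input, using only the exhaustion theorem and Urysohn's lemma, so it is marginally more self-contained; your route yields a cleaner, globally defined $v$.) The remaining differences --- your thresholds $a_n=1/(nc_n)$ versus the paper's $b_n=a_{n+1}^{-2}$, and your shells $A_n=K_n\setminus\operatorname{int}K_{n-1}$ versus the paper's annuli --- are immaterial; both give $\sup_{t\in A_n}\abs{v(t)X_t}\le 1/n$ eventually, which is exactly what the convergence requires.
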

\begin{proof}
	Being a separable metric space, the space $\T$ is Lindel\"of, so in view of local compactness there exists a countable family $\set{\T_n, n\ge 1}$ of  compact sets such that $\T = \bigcup_{n\ge 1} \T_n$ and $\T_n$ is contained in $\T_{n+1}^\circ$, the interior of $\T_{n+1}$, for each $n\ge 1$ (see e.g.\ \cite[Chapter XI, Theorem 7.2]{dugundji}). 
	
	Since $X$ is continuous and for each $n\ge 1$, $\T_n$ is compact, then clearly 
	$$\pr(\sup_{t\in \T_n} |X_t|<\infty)=1.$$
	Therefore, there exists some $a_n>0$ such that $\pr\left(\sup_{t\in \T_n} |X_t|>a_n\right) <2^{-n}$. Without loss of generality, we can assume that $a_n<a_{n+1}$ for each $n\ge 1$ and $a_n\to \infty$, $n\to\CE{+}\infty$. 
	
	For any $n\ge 1$, denote $b_n = a_{n+1}^{-2}$ and $D_n = \partial \T_n := \T_n \setminus \T_n^\circ$. Since $\T_n \subset \T_{n+1}^\circ$, we have $\partial \T_n\cap \partial \T_{n-1} = \varnothing$. Then by Urysohn's lemma, there exists a continuous function $v_n\colon \T_{n+1}\setminus \T_n^\circ\to [b_{n+1},b_n]$ such that $v_n(t) = b_n$, $t\in D_n$, $v_n(t) = b_{n+1}$, $t\in D_{n+1}$. Now set 
	$$
	v(t) = a_1 \ind{\T_1}(t) + \sum_{n= 1}^\infty v_n(t) \ind{\T_{n+1}\setminus \T_n }(t), t\in \mathbb{T}. 
	$$
	By construction, this is a continuous function with $\sup_{\T \setminus\T_n} v(t)\le b_n$, $n\ge 1$. 
	
	On the other hand, by the Borel--Cantelli lemma, with probability $1$ there exists $n_0(\omega)$ such that $\sup_{t\in \T_n} |X_t|\le a_n$, $n\ge n_0(\omega)$. Therefore, for $n\ge n_0(\omega)$
	\begin{gather*}
	\sup_{t\in \T_{n+1}\setminus\T_n} \bigl|v(t) X_t\bigr| \le \sup_{\T \setminus\T_n} v(t)\cdot \sup_{t\in \T_{n+1}} |X_t| \le a_{n+1}^{-2}\cdot a_{n+1} = a_{n+1}^{-1}.
	\end{gather*}
	Consequently, for all $n\ge n_0(\omega)$  
	$$\sup_{t\in \T\setminus\T_n} \bigl|v(t) X_t\bigr| \le a_{n+1}^{-1}\to 0$$
	as $n\to\CE{+}\infty$ establishing the proof. 
\end{proof}
\begin{remark}
	The above lemma is valid for any continuous process on $\T$, since the Gaussian distribution of $X$ is not used in the proof. 
\end{remark}

Now we are ready to state the main result about reduction to the case of compact parameter space.

Namely, set below 
$$\overline X_t = v(t)\CE{X_t},  \quad \bar f(t) = v(t)f(t),  \quad \bar u(t) = v(t)u(t)$$
and putting   
$$\overline{X}_{t_\infty} = \bar f(t_\infty)  =  0,  \quad \bar u(t_\infty) = (\liminf_{t\to t_\infty} v(t)u(t))\wedge 0$$
we have the following statement. 
\begin{theorem}
	The process $\overline X$ is a continuous centered Gaussian process on $\overline \T$ and for any $f\in \mathbb{H}_X$ and any lower semicontinuous $u\colon \T \to \R$,  we have that $\bar f \in \mathbb{H}_{\overline X}$, $\bar u$ is lower semicontinuous and further 
	\begin{equation}\label{eq:loccompprobequality}
	\pr \left(\forall t\in \T \ \ X_t +  f(t)\le  u(t)\right) = \pr \left(\forall t\in \overline {\T}\ \ \overline  X_t + \bar f(t)\le \bar u(t)\right).
	\end{equation}
\end{theorem}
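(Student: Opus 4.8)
The plan is to verify the four assertions bundled in the statement — that $\overline X$ is a continuous centered Gaussian process, that $\bar f\in\mathbb H_{\overline X}$, that $\bar u$ is lower semicontinuous, and finally the probability identity \eqref{eq:loccompprobequality} — treating the first and third as routine and concentrating the effort on the last two. For the Gaussian and continuity claims I would note that on $\T$ each $\overline X_t=v(t)X_t$ is a deterministic multiple of $X_t$, so every finite linear combination of the $\overline X_t$, together with the degenerate value $\overline X_{t_\infty}=0$, is a linear combination of the $X_t$ and hence centered Gaussian; the path $t\mapsto v(t)X_t$ is continuous on $\T$ as a product of continuous functions, while continuity at $t_\infty$ in the one-point compactification topology, i.e.\ $\overline X_t\to 0$ as $t$ leaves every compact set, is exactly the content of Lemma~\ref{lem:vanisher}. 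For lower semicontinuity of $\bar u$ on $\T$ I would observe that $\set{\bar u>a}=\set{u-a/v>0}$ is open for every $a\in\R$, being the positivity set of the lower semicontinuous function $u-a/v$ (a sum of the lsc $u$ and the continuous $-a/v$, using $v>0$); at $t_\infty$ lower semicontinuity is immediate since $\liminf_{t\to t_\infty}\bar u(t)=\liminf_{t\to t_\infty}v(t)u(t)\ge\bigl(\liminf_{t\to t_\infty}v(t)u(t)\bigr)\wedge 0=\bar u(t_\infty)$.

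For $\bar f\in\mathbb H_{\overline X}$ I would exploit the explicit covariance $\overline R(t,s)=v(t)v(s)R(t,s)$ for $t,s\in\T$ (with $\overline R(t_\infty,\cdot)\equiv 0$). Consider the assignment sending $R(t,\cdot)\in\mathbb H_X$ to $v(t)^{-1}\overline R(t,\cdot)$, that is, to the function $s\mapsto v(s)R(t,s)$ in $\mathbb H_{\overline X}$. By \eqref{covR} this preserves inner products of kernels, since $\pprod{v(t)^{-1}\overline R(t,\cdot),v(s)^{-1}\overline R(s,\cdot)}_{\mathbb H_{\overline X}}=v(t)^{-1}v(s)^{-1}\overline R(t,s)=R(t,s)$, so it extends by density to a linear isometry $T\colon\mathbb H_X\to\mathbb H_{\overline X}$. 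Because norm convergence in a reproducing kernel Hilbert space forces pointwise convergence, and because $T$ acts as multiplication by $v$ on the dense span of the kernels, a limiting argument shows $T$ acts as multiplication by $v$ throughout; in particular $\bar f=vf=Tf\in\mathbb H_{\overline X}$ with $\norm{\bar f}_{\mathbb H_{\overline X}}=\norm{f}_{\mathbb H_X}$. As a by-product, $\bar f$ is an element of the RKHS of the continuous process $\overline X$, hence lies in $C_0(\overline\T;\overline\T_0)$ and is continuous with $\bar f(t_\infty)=0$, consistent with the stated convention.

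The main work is the identity \eqref{eq:loccompprobequality}. Writing $A$ and $B$ for the events on the left- and right-hand sides, the key reduction is that for $t\in\T$ the constraint $\overline X_t+\bar f(t)\le\bar u(t)$ is equivalent, after dividing by $v(t)>0$, to $X_t+f(t)\le u(t)$; hence the $\T$-part of $B$ coincides with $A$, so $B=A\cap\set{0\le\bar u(t_\infty)}$ and in particular $B\subseteq A$. It then remains to analyse the single extra constraint at $t_\infty$, which reads $0\le\bar u(t_\infty)=\bigl(\liminf_{t\to t_\infty}v(t)u(t)\bigr)\wedge 0$ and is a deterministic condition. If $\liminf_{t\to t_\infty}v(t)u(t)\ge 0$ it holds automatically, so $B=A$ and the probabilities coincide. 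If instead $\liminf_{t\to t_\infty}v(t)u(t)<0$, then $B=\varnothing$ and I must show $\pr(A)=0$: here I would invoke continuity of $\overline X$ at $t_\infty$, since on $A$ one has $\overline X_t+\bar f(t)\le\bar u(t)$ for all $t\in\T$, and letting $t\to t_\infty$ the left-hand side tends a.s.\ to $\overline X_{t_\infty}+\bar f(t_\infty)=0$, whence $0\le\liminf_{t\to t_\infty}\bar u(t)=\liminf_{t\to t_\infty}v(t)u(t)<0$ would hold on $A$ up to a null set, which is impossible unless $\pr(A)=0$. This last step — passing the pointwise boundary inequality to the limit at the compactification point and reconciling it with the deterministic truncation $\wedge\,0$ — is the delicate point of the argument; everything else is bookkeeping.
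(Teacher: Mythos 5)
Your proof is correct, and for the central claim $\bar f\in\mathbb{H}_{\overline X}$ it takes a genuinely different route from the paper. The paper uses the measure-theoretic characterization of the Cameron--Martin space: for any $\overline A\subset C(\overline\T)$ that is null for $\overline X$, it passes to $A=\set{h|_{\T}/v,\ h\in\overline A}$, which is null for $X$, and then absolute continuity from $f\in\mathbb{H}_X$ gives $\pr(\overline X+\bar f\in\overline A)=\pr(X+f\in A)=0$; the identity \eqref{eq:loccompprobequality} is then declared ``obtained similarly'', and the degenerate case $\bar u(t_\infty)<0$ is dismissed in a one-line parenthetical asserting both probabilities vanish. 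You instead work at the level of reproducing kernels: from $\overline R(t,s)=v(t)v(s)R(t,s)$ you build the map $R(t,\cdot)\mapsto v(t)^{-1}\overline R(t,\cdot)$, verify via the reproducing property \eqref{covR} (applied to both $X$ and $\overline X$, the latter being legitimate since $\overline X$ is a continuous Gaussian process on the compact separable space $\overline\T$) that inner products of kernels are preserved, extend to an isometry, and use norm-to-pointwise convergence in an RKHS to identify the extension with multiplication by $v$. This buys two things the paper leaves implicit: the exact norm identity $\norm{\bar f}_{\mathbb{H}_{\overline X}}=\norm{f}_{\mathbb{H}_X}$, which sharpens the paper's remark that the reduction is well adapted to asymptotics like \eqref{ldp} where the RKHS norm is the leading coefficient, and the continuity of $\bar f$ at $t_\infty$ with $\bar f(t_\infty)=0$, i.e.\ $v(t)f(t)\to 0$, which your proof of \eqref{eq:loccompprobequality} genuinely needs when you pass the inequality $v(t)\bigl(X_t+f(t)\bigr)\le v(t)u(t)$ to the limit along a sequence realizing $\liminf_{t\to t_\infty}v(t)u(t)<0$ to conclude $\pr(A)=0$ --- precisely the assertion the paper's parenthetical makes without argument. (In the paper's approach this continuity is also available, but only implicitly: absolute continuity of the law of $X+f$ with respect to that of $X$ combined with Lemma~\ref{lem:vanisher} forces $v(t)f(t)\to0$ a.s., hence deterministically.) Your limit step tacitly uses that the liminf at $t_\infty$ is attained along a sequence escaping every compact set, which is fine here since $\overline\T$ is a metrizable compactification --- worth one explicit sentence. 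The remaining components --- Gaussianity and continuity of $\overline X$ via Lemma~\ref{lem:vanisher}, lower semicontinuity of $\bar u$ through the open sublevel-set argument $\set{\bar u>a}=\set{u-a/v>0}$, and the equivalence of the $\T$-constraints after dividing by $v(t)>0$ --- coincide with the paper's, with your write-up supplying details the paper compresses into ``by definition'' and ``obtained similarly''.
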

\begin{remark}
	It is important e.g.\ for asymptotic results like \eqref{ldp}  that $\overline X$, $\bar f$, and $\bar u$ depend linearly (and in a rather simple way) on $X$, $f$ and $u$, respectively.
\end{remark}
\begin{proof}
	The process $\overline X$ is obviously centered Gaussian, and Lemma~\ref{lem:vanisher} immediately implies that $\overline X$ is continuous. The fact that $\bar f \in \mathbb H_{\overline{X}}$ is a consequence of the following well-known characterization of the Cameron--Martin space. Namely, it consists of functions $f$ such that the distribution of $X+f$ is absolutely continuous w.r.t.\ that  of $X$. That said, for any $\overline A\subset C(\overline \T)$ such that $\pr(\overline X\in \overline A)=0$,  define $A = \set{h|_{\T}/v, h\in \overline A}$ and write
	$$
	\pr \left(X \in A \right) = \pr \left( \overline  X  \in \overline A\right) = 0.
	$$
	Hence, since  $f\in\mathbb H_X$ we have
	$$
	\pr \left( \overline  X + \bar f \in \overline A\right) = \pr \left(X + f \in A \right) = 0,
	$$
	whence we derive that $\bar f\in \mathbb H_{\overline{X}}$. Equation \eqref{eq:loccompprobequality} is obtained similarly. 
	
	It remains to remark that $\bar u$ is lower semicontinuous by definition. (It is possible that $\bar u(t_\infty)=-\infty$, but in this case, and more generally in the case where $u(t_\infty)<0$  both probabilities in question are equal to zero.)
\end{proof}

\CE{\subsection{Relation to the large deviation principle}\label{sec:ldp}
	
	The asymptotics \eqref{ldp} is also closely related to the large deviation principle. Since for process $X$ is a centered Gaussian element in the separable Banach space $C_0(\T;\T_0)$, we can apply the general large deviation principle  by Donsker and Varadhan \cite{DV76} (see also \cite[Section 3.4]{DS89}, \cite[Theorem 4.5]{DGL96}): for any Borel set $A\subset C_0(\T;\T_0)$, 
	\begin{equation}\label{eq:gen-ldp}
	- \inf_{A^\circ} I(x) \le \liminf_{\eps \to 0+} \eps^2 \log \pr (X\in A) \le  \limsup_{\eps \to 0+} \eps^2 \log \pr (X\in A) \le - \inf_{\overline A} I(x),
	\end{equation}
	where $A^\circ$ and $\overline{A}$ are the interior and the closure of $A$, respectively. If we assume, as before, the injectivity of the covariance operator, the rate functional can be identified through the concept of Wiener quadruple (see \cite[p. 88]{DS89}): the Banach space $C_0(\T;\T_0)$ together with the Hilbert space $\HK$, the identity map $S\colon \HK\to C_0(\T;\T_0)$ (which is injective thanks to our assumption) and the distribution of $X$ forms a Wiener quadruple, so the rate functional is given by \cite[Theorem 3.4.12]{DS89}:
	$$
	I(x) = \begin{cases}
	\frac 12 \norm{x}^2_{\HK}, & x\in \HK,\\
	\infty, & x\notin \HK.
	\end{cases}
	$$

	To relate the large deviation extimates \eqref{eq:gen-ldp} to the boundary non-crossing probability $P_{\cY f,u}$, denote $\eps = \cY^{-1}$, $$A_{ f} = \set{g\colon \T\to \R\mid \forall t\in \T\ g(t)\le -f(t)}.$$ 
	Then the  boundary non-crossing probability can be written as $P_{\cY f,u}  = \pr\bigl(\eps X \in A_{f-\eps u} \bigr)$, however, \eqref{eq:gen-ldp} is not directly applicable since the target set $A_{f-\eps u}$ depends on $\eps$. To overcome this problem, one may fix some $\eps_0>0$ and write 
	\begin{equation*}
	\begin{gathered}
	\limsup_{\eps \to 0+} \eps^2  \log \pr\bigl(\eps X \in A_{f - \eps u} \bigr) \le 
	\limsup_{\eps \to 0+} \eps^2  \log \pr\bigl(\eps X \in A_{f -\eps_0 u^+} \bigr)\\
	\le  - \inf_{\overline A_{f - \eps_0 u^+}} I(x) =  - \inf_{A_{f -\eps_0 u^+}} I(x),
	\end{gathered}
	\end{equation*}
	where $u^+(t) = \max\{u(t),0\}$. Then, letting $\eps_0\to \CE{+}0$, we get 
	$$
	\limsup_{\eps \to 0+} \eps^2  \log \pr\bigl(\eps X \in A_{f - \eps u} \bigr) \le 
	- \inf_{A_{f}} I(x).
	$$
	But it is clear that 
	$$\inf_{\overline A_{f}}I(x) = \frac12 \inf\set{\norm{g}^2_{\HK}: g\le -f} = 
	\frac12 \inf\set{\norm{h}^2_{\HK}: h\ge f} = \frac{1}{2}\bnorm{\tilde{f}}^2_{\HK},
	$$
	where $\tilde f$ is, as before, the solution to the constrained minimization problem \eqref{eq:minprob}. As a result, going back to our notation,
	\begin{equation}\label{LD:upper}
	\lim_{\cY\to +\infty} \cY^{-2}\log P_{\cY f,u} \le  -\frac{1} 2 {\bnorm{\tilde f}_{\mathbb{H}_X}^2 }.
	\end{equation}
	However, in the case where $\T_0\neq \varnothing$, there is no clear way how to get a lower bound from \eqref{eq:gen-ldp}: when $u$ is non-negative, the same approach gives 
	$$
	\limsup_{\eps \to 0+} \eps^2  \log \pr\bigl(\eps X \in A_{f-\eps u} \bigr) \ge 
	\limsup_{\eps \to 0+} \eps^2  \log \pr\bigl(\eps X \in A_{f} \bigr)\\
	\ge  - \inf_{A^\circ_{f}} I(x).
	$$
	Since both $X$ and $f$ vanish on $\T_0$, $A_f$ has empty interior, so $\inf_{A^\circ_{f}} I(x) = \infty$; the lower estimate given by the large deviation principle  sharp, as $\pr\bigl(\eps X \in A_{f} \bigr) = 0$ in many cases, e.g., for Brownian motion on $[0,T]$. 
	
	As it was mentioned in Introduction, there is another (simpler) way to derive a lower bound, leading to \eqref{lower:LD}, which, combined with \eqref{LD:upper}, yields
	$$
	\log P_{\cY f,u}  \sim  -\frac{\cY ^2} 2 {\bnorm{\tilde f}_{\mathbb{H}_X}^2 }, \quad \cY\to +\infty.
	$$ 
	Unfortunately, this gives only the main term of asymptotic expansion \eqref{ldp}. The next term of the asymptotics comes from the following heuristics: the target set $A_{f - \eps u}$ is almost $A_f$ with slightly perturbed boundary. Therefore, denoting by $\Lambda(A)$ the ``rate functional'' corresponding to the set $A$, and assuming some smoothness, one might expect that for $\eps\to 0+$,
	\begin{equation}\label{eq:false-ldp}
	\Lambda(A_{f-\eps u}) \approx \Lambda(A_f) - \eps \sprod{u,\Lambda'_f(A_f)} = \frac{1}{2}\bnorm{\tilde f}_{\mathbb{H}_X}^2 - \eps \sprod{u,\Lambda'(A_f)},
	\end{equation}
	where $\Lambda'_f$ is the derivative in some sense of $\Lambda(A_f)$ with respect to $f$, and this relation looks similar to \eqref{ldp}. In certain situations, this heuristic argument may be given a precise meaning: see, for example, \cite[Theorem 9.3.2]{Bor13}. However, in our case such argument would most likely fail. Indeed, if it were possible to validate, it would also work for negative ``perturbations''. However, if $\T_0\neq \varnothing$ and $u$ is positive, then $\pr(X \in A_{f+\eps u}) = 0$, since $X$ and $f$ both vanish on $\T_0$, so \eqref{eq:false-ldp} cannot provide correct logarithmic asymptotics.

	On the other hand, the large deviation estimates may be used to derive the asymptotic behavior of the probabilities $\pr(\eps X\in A_{f-\delta u})$ when $\eps\to 0+$ and then $\delta\to 0+$, similarly to the results for random walks, established in \cite{Bor67}, but such questions are beyond the scope of our article. 
}

\section{Applications}\label{sec:examples}
In this section we specialize the general results of Section~\ref{sec:general} to several one-parameter processes. In all the examples, we skip the routine verification of assumptions (P) and (D) while putting more emphasis on relevant details.

Throughout the section,  $W = \{W_t,t\in \mathbb{R}\}$ is a standard Wiener process on $(\Omega,\F,\pr)$. By $AC([a,b])$ we will denote the set of absolutely continuous functions defined on $[a,b]$, and for $f\in AC([a,b])$, $f'$ will denote its weak derivative.

\subsection{Wiener process on $[0,b]$}\label{ex:wiener[0,b]}
Let $\T = [0,b]$ and $X = W$. Now $\T_0=\{0\}$, $\T_1 = \T\setminus \T_0 = (0,b]$. The primary space is $C_0([0,b]) = \set{x\in C([0,b]): x(0) = 0}$ with dual $M((0,b])$, the space of finite signed measures on $(0,b]$. The covariance operator  is given by 
\begin{equation}\label{eq:cov-wiener}
\begin{gathered}
\mathcal R \mu(t) = \int_0^T \min (t,s) \mu(ds) =  \int_0^T \int_0^t \ind{[0,s]}(u)du\, \mu(ds) 
\\= \int_0^t \int_0^T \ind{[0,s]}(u)\mu(ds) du
= \int_0^t \mu([u,b])du = \int_0^t J\mu (u)du,
\end{gathered}
\end{equation}
where $J\mu(u) = \mu([u,b])$, $u\in[0,b)$, $J\mu(T) = \mu(\set{T})$. Similarly, for $\mu,\nu\in M((0,b])$ 
\begin{gather*}
\sprod{\mathcal R \mu,\nu} = \int_0^b\int_0^b \min (t,s) \mu(ds)\nu(dt) = \int_0^b \int_0^b\int_0^b\ind{[0,t]}(u) \ind{[0,s]}(u)du\, \mu(ds)\nu(dt) \\
= \int_0^b \int_0^b \ind{[0,s]}(u)\mu(ds) \int_0^b\ind{[0,b]}(u)\nu(dt) du = \int_0^b J\mu(u)J\nu(u)du = \big(J\mu,J\nu\big)_{L^2[0,b]}.
\end{gather*}
Consequently,  $J$ extends to an isomorphism between $\mathcal H_X$ and $L^2[0,b]$; the image is full since $J\mu$ can be arbitrary left-continuous bounded variation function. Therefore, in view of \eqref{eq:cov-wiener}, the image of $\mathcal H_X$ under the covariance operator consists of functions of the form $\int_0^t h(u)du$, where $h\in L^2[0,b]$, which is the well-known description of the Cameron-Martin space of $W$. It is worth mentioning that for $\mu\in M((0,b])$ and $f = \mathcal R\mu$  we have 
\begin{gather*}
\int_0^t f'(t)dW_t = \int_0^b J\mu(t) dW_t = \int_0^b \mu([t,b])dW_t = \int_0^b \int_t^b \mu(ds) dW_t\\
= \int_0^b \int_0^s dW_t\, \mu(ds)  = \int_0^b W_s\, \mu(ds) = \sprod{W,\mu},
\end{gather*}
so the Cameron-Martin density can be transformed to its more familiar form:
\begin{gather*}
\mathcal{E}_W(\mu) = \exp\Bigl\{\sprod{W,\mu} - \frac12\norm{\mu}^2_{\mathcal H_X} \Bigr\} = \exp\Bigl\{\int_0^t f'(t)dW_t - \frac12\norm{J\mu}^2_{L^2[0,b]} \Bigr\}\\ = \exp\Bigl\{\int_0^t f'(t)dW_t - \frac12\norm{f'}^2_{L^2[0,b]} \Bigr\}.
\end{gather*}

Further, the image of a non-negative finite measure on $(0,b]$ is an  absolutely continuous function $f$ with $f(0) = 0$ and with a non-increasing non-negative derivative. Equivalently, this is a  concave  non-decreasing function with $f(0) = 0$. Therefore, in order to identify the function $\tilde f$ from Theorem~\ref{thm:assym-gen}, which corresponds to the drift $f = \mathcal R \gamma$, we need to find a concave non-decreasing function $\tilde f\ge f$ such that $\tilde \gamma = \mathcal R^{-1}\tilde f$ satisfies (G2). The latter is equivalent to 
$$
\pprod{J \gamma - J\tilde \gamma,J\tilde \gamma}_{L^2[0,b]}\ge 0,
$$
which, in view of \eqref{eq:cov-wiener}, reads
$$
\big(f'-\tilde f',\tilde f'\big)_{L^2[0,b]}\ge 0. 
$$
Thanks to Theorem~\ref{thm:concmaj}, this property (even with equality) is satisfied by the least non-decreasing concave majorant of $f$, which also is a solution to the minimization problem~\eqref{eq:minprob}. This is not surprising, as we recover the well-known results for the Wiener process (see e.g. \cite{BHH07}), which we summarize below. Note also that, by definition of $J\gamma$, we should define $\tilde f'$ to be left-continuous on $(0,b]$ and continuous at zero, so we should take the left derivative for $t\in(0,b]$ and the right derivative at $0$.
\begin{theorem}\label{thm:wiener[0,T]}
	Let $u\colon [0,b]\to \R$ be lower semicontinuous,  $f\in AC([0,b])$ be such that $f(0)=0$ and $f'\in L^2[0,b]$, $\tilde f$ be the least non-decreasing concave majorant of $f$, and $\tilde f'_-(t)$ be its left derivative (right derivative for $t=0$).
	
	1. The probability $P_{f,u} = \pr(\forall t\in[0,b]\ W_t+f(t)\le u(t))$ admits the upper bound
	\begin{equation*} 
	P_{f,u} \le P_{f-\tilde f,u} \exp \Bigl \{- \frac{1}{2}\bnorm{\tilde f'_-}_{L^2[0,b]}^2 + \int_0^b u(t) d\bigl(-\tilde f'_-(t)\bigr)\Bigr \}.
	\end{equation*}
	
	2. For any $u_{-}\in C([0,b])$ such that $u_{-}(t) < u(t)$ for all $t\in [0,b]$ and 
	$$
	P_{0,u,u_-} := \pr\big(\forall t\in[0,b]\ \ u_{-}(t)\le W_t\le u(t)\big)>0,
	$$
	the probability $P_{f,u}$ admits the lower bound 
	\begin{equation*}
	\begin{gathered}
	P_{f,u} \ge P_{0,u,u_{-}}  \exp \Bigl  \{ - \frac{1}{2} \bnorm{ \tilde f'_-}_{L^2[0,b]}^2 + \int_0^b u_-(t) d\bigl(-\tilde f'_-(t)\bigr) \Bigr \}.
	\end{gathered}
	\end{equation*}
	
	3. If $u(0)>0$, then the following asymptotics holds:
	$$
	\CE{\log}\, P_{\cY f,u} = - \frac{\cY ^2}{2} \bnorm{ \tilde f'_-}_{L^2[0,b]}^2 + \cY \int_0^b u(t) d\bigl(-\tilde f'_-(t)\bigr) +o(\cY ), \quad \cY \to+\infty. 
	$$
	
\end{theorem}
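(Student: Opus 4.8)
The plan is to derive all three parts as direct specializations of the general results of Section~\ref{sec:general}, using the identification of $\tilde f$ and $\tilde\gamma$ supplied by the discussion preceding the theorem. Concretely, I would take $\tilde f$ to be the least non-decreasing concave majorant of $f$ and set $\tilde\gamma=\mathcal R^{-1}\tilde f$; the whole proof then reduces to checking that this pair satisfies (G1)--(G3) (and, for part~3, the standing assumptions), and to rewriting the Hilbert-space quantities $\norm{\tilde f}_{\HK}$ and $\Theta(\tilde\gamma,u)$ in the explicit $L^2$/Stieltjes form appearing in the statement.

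First I would verify the three conditions. Since a non-decreasing concave function vanishing at $0$ is exactly the covariance-operator image of a non-negative measure on $(0,b]$ (as recorded after \eqref{eq:cov-wiener}), we have $\tilde f=\mathcal R\tilde\gamma$ with $\tilde\gamma\in M^+((0,b])$, which is (G1). Condition (G3) is immediate, as $\tilde f$ majorizes $f$ by construction. For (G2) I would invoke Theorem~\ref{thm:concmaj}, which guarantees that the least concave majorant satisfies $\bprod{f'-\tilde f'_-,\tilde f'_-}_{L^2[0,b]}=0$; in view of \eqref{eq:cov-wiener} and the isometry $J$ this is exactly $\bprod{f-\tilde f,\tilde f}_{\HK}=0$, so (G2) holds with equality.

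Next I would translate the two quantities. The norm identity $\norm{\tilde f}_{\HK}^2=\norm{\tilde\gamma}_{\HKX}^2=\norm{\tilde f'_-}_{L^2[0,b]}^2$ follows from the isomorphism $J\colon\HKX\to L^2[0,b]$ together with $J\tilde\gamma=\tilde f'_-$. For $\Theta$, observe that $\tilde f'_-(t)=J\tilde\gamma(t)=\tilde\gamma([t,b])$, so $\tilde f'_-$ is the left-continuous non-increasing function recording the tails of $\tilde\gamma$; hence $\tilde\gamma(dt)=d(-\tilde f'_-(t))$ as measures on $(0,b]$ and $\Theta(\tilde\gamma,u)=\int_{(0,b]}u\,d\tilde\gamma=\int_0^b u(t)\,d(-\tilde f'_-(t))$, and similarly for $u_-$. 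With these substitutions, parts~1 and~2 are precisely \eqref{eq:noncrossingprobupperbound-verygeneral} and \eqref{eq:lower} of Theorem~\ref{thm:general}. For part~3 I would apply Theorem~\ref{thm:assym-gen}: assumption (D) holds by a routine injectivity computation for $\mathcal R$, and since $\T_0=\{0\}$ the hypothesis $u(0)>0$ says $u>0$ on $\T_0$, so (U) follows from Remark~\ref{prop:U}; as (G1)--(G3) are already in hand, \eqref{eq:P_cf-asymp} yields the stated asymptotics after the same substitutions. Note that (P) is not needed here, since (G1) was checked directly rather than through Proposition~\ref{prop:G1-G3}.

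The genuinely analytic content is isolated in Theorem~\ref{thm:concmaj}, so the remaining work is essentially bookkeeping. I expect the main nuisance to be the Stieltjes/derivative conventions at the two endpoints: one must take $\tilde f'_-$ to be the left derivative on $(0,b]$ and the right derivative at $0$ (so that $J\tilde\gamma=\tilde f'_-$ holds as left-continuous functions), and one must check that the Lebesgue--Stieltjes measure $d(-\tilde f'_-)$ on $(0,b]$ reproduces $\tilde\gamma$ faithfully, including a possible atom at $b$ and the absence of mass at the excluded point $0$. None of this is deep, but it is exactly where a stray sign or a half-open-interval convention could creep in.
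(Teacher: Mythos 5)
Your proposal is correct and follows essentially the same route as the paper: Subsection~3.1 proves this theorem precisely by identifying $J\tilde\gamma=\tilde f'_-$ via \eqref{eq:cov-wiener}, checking (G1)--(G3) for the least non-decreasing concave majorant through Theorem~\ref{thm:concmaj} (so that (P) is indeed bypassed), and then specializing Theorem~\ref{thm:general} for parts~1--2 and Theorem~\ref{thm:assym-gen} together with Remark~\ref{prop:U} (using $\T_0=\{0\}$ and $u(0)>0$) for part~3. Your attention to the left-continuity convention for $\tilde f'_-$ matches the paper's own remark on this point, so nothing is missing.
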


\subsection{Wiener process on $[a,b]$}

Let again  $X = W$, but $\T = [a,b]$ with $a<b$, $ab\neq 0$. There are two different cases depending on whether $0\in [a,b]$ or not. 

\subsubsection{$0\notin [a,b]$}
In this case $\T_0=\varnothing$, $\T_1 = [a,b]$. The primary space is $C([a,b])$ with dual $M([a,b])$, the space of finite signed measures on $[a,b]$. Without loss of generality, we can assume $a>0$.  

Similarly to \eqref{eq:cov-wiener}, the covariance operator  is given by 
\begin{equation}\label{eq:cov-wiener-ab}
\begin{gathered}
\mathcal R \mu(t) = \int_a^b \int_0^{t} \ind{[0,s]}(u)du\, \mu(ds) 
= \int_0^t \int_{a}^b \ind{[0,s]}(u)\mu(ds) du\\
= \int_0^t \mu([u\vee a,b])du = \int_0^t J_a\mu (u)du, t\in [a,b],
\end{gathered}
\end{equation}
where $J_a\mu(u) = \mu([u\vee a,b])$; also for $\mu,\nu\in M([a,b])$ we have  
\begin{gather*}
\sprod{\mathcal R \mu,\nu} = 
\int_a^b J_a\mu(u)J_a\nu(u)du = \big(J_a\mu,J_a\nu\big)_{L^2[0,b]}.
\end{gather*}
As above, the operator  $J_a$ extends to an isomorphism between $\mathcal H_X$ and some subspace of $L^2[0,b]$. The image is now not full, since, for each $\mu\in M([a,b])$, $J_a \mu$ is constant on $[0,a]$; in fact, it is easy to see that $J_a \mathcal H_X$ consists of square integrable functions which are constant on $[0,a]$. Then, by \eqref{eq:cov-wiener-ab}, the image of $\mathcal H_X$ under the covariance operator consists of absolutely continuous functions on $[a,b]$ with square integrable derivative. The image of $M^+([a,b])$ is a bit trickier. As in the previous example, by \eqref{eq:cov-wiener-ab}, it contains concave non-decreasing functions, but not all of them. In fact, it is easy to see from \eqref{eq:cov-wiener-ab} that we must have $f'_{+}(a) = \mu((a,b])\le \mu([a,b])= f(a)/a\ge 0$; also every concave non-decreasing function with such property belongs to the image. Now the function $\tilde f$ from Theorem~\ref{thm:assym-gen} corresponding to  the drift $f = \mathcal R \gamma$ is a concave non-decreasing function such that $\tilde f'_+(a) \ge \tilde f_a(a)\ge 0$ and  $\tilde \gamma = \mathcal R^{-1}\tilde f$ satisfies (G2). As in the previous example, it is possible to identify this function. Namely, thanks to the isomorphism property of $J_a$, we can rewrite (G2) as
\begin{equation}\label{eq:pprodja-j}
\pprod{J_a \gamma - J_a\tilde \gamma,J_a\tilde \gamma}_{L^2[0,b]}\ge 0.
\end{equation}
We have $J_a \gamma(t) = f'(t)$ for $t\in [a,b]$, and $J_a g$ is constant on $[0,a]$ with $\int_0^a J_a \gamma(t) = f(a)$. So, if we extend $f$ to $[0,a]$ linearly, i.e.\ $f(t) = t f(a)/a$, $t\in[0,a]$, then we have $J_a \gamma(t) = f'(t)$ for $t\in [0,b]$. Then, for the least concave non-decreasing  majorant $\tilde f$ of $f$, we have by Theorem~\ref{thm:concmaj} that
$$
\big(f'-\tilde f',\tilde f'\big)_{L^2[0,b]}= 0. 
$$
Moreover, $\tilde f'$ is clearly constant on $[0,a]$, so we have $\tilde f'_- = J_a \tilde \gamma$, where $\tilde \gamma$ is the non-negative measure on $[a,b]$ given by $\tilde \gamma([t,b]) = -\tilde f'_-(t)$, $t\in [a,b]$, and \eqref{eq:pprodja-j} follows. Hence we arrive at the following result.

\begin{theorem}\label{thm:wiener[a,b]}
	Let $u\colon [a,b]\to \R$ be lower semicontinuous and $f\in AC([a,b])$  be such that $f'\in L^2[a,b]$. Define $f(t) = t f(a)/a$ for $t\in [0,a]$, let $\tilde f\colon [0,b]\to \R$ be the least non-decreasing concave majorant of $f$ on $[0,b]$ and $\tilde f'_-$ be its left derivative.
	
	1. The probability $P_{f,u} = \pr(\forall t\in[a,b]\ W_t+f(t)\le u(t))$ admits the upper bound
	\begin{equation*} 
	P_{f,u} \le P_{f-\tilde f,u} \exp \Bigl \{- \frac{1}{2}\bnorm{\tilde f'_-}_{L^2[0,b]}^2 + \int_a^b u(t) d\bigl(-\tilde f'_-(t)\bigr)\Bigr \}.
	\end{equation*}
	
	2. For any $u_{-}\in C([a,b])$ such that $u_{-}(t) < u(t)$ for all $t\in [a,b]$ and
	$$
	P_{0,u,u_-} := \pr\big(\forall t\in[a,b]\ \ u_{-}(t)\le W_t\le u(t)\big)>0,
	$$
	the probability $P_{f,u}$ admits the lower bound 
	\begin{equation*}
	\begin{gathered}
	P_{f,u} \ge P_{0,u,u_{-}}  \exp \Bigl  \{ - \frac{1}{2} \bnorm{ \tilde f'_-}_{L^2[0,b]}^2 + \int_a^b u_-(t) d\bigl(-\tilde f'_-(t)\bigr) \Bigr \}.
	\end{gathered}
	\end{equation*}
	
	3. The following asymptotics holds:
	$$
	\CE{\log}\, P_{\cY f,u}= - \frac{\cY ^2}{2} \bnorm{ \tilde f'_-}_{L^2[0,b]}^2 + \cY \int_a^b u(t) d\bigl(-\tilde f'_-(t)\bigr) + o(\cY ), \quad \cY \to+\infty. 
	$$
	
\end{theorem}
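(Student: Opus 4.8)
The plan is to derive all three assertions as specializations of the general bounds of Theorem~\ref{thm:general} and the asymptotic expansion of Theorem~\ref{thm:assym-gen}, once the abstract objects $\tilde f$ and $\tilde\gamma$ are matched with their concrete analytic counterparts. That identification has essentially been performed in the discussion preceding the statement, so I would recall it: after extending $f$ linearly to $[0,a]$ by $f(t)=tf(a)/a$, take $\tilde f$ to be the least non-decreasing concave majorant of the extended $f$ on $[0,b]$. Since $\tilde f$ is concave and non-decreasing with $\tilde f(0)=0$, its left derivative $\tilde f'_-$ is non-negative and non-increasing, and because it is constant on $[0,a]$ it lies in $J_a\mathcal H_X$ and equals $J_a\tilde\gamma$ for the non-negative Lebesgue--Stieltjes measure $\tilde\gamma=d(-\tilde f'_-)$ on $[a,b]$. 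By \eqref{eq:cov-wiener-ab} this yields $\tilde f=\mathcal R\tilde\gamma$.

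First I would check the three structural conditions. Condition (G1) holds because $-\tilde f'_-$ is non-decreasing, so $\tilde\gamma\in M^+([a,b])$; (G3) holds because $\tilde f\ge f$ by construction, a fortiori on $[a,b]$; and (G2), in the form \eqref{eq:pprodja-j}, reads $\bigl(f'-\tilde f',\tilde f'\bigr)_{L^2[0,b]}=0$, which is furnished (even as an equality) by Theorem~\ref{thm:concmaj} applied to the majorant of the extended $f$. Next I would translate the abstract quantities using the isometry $J_a\colon \mathcal H_X\to L^2[0,b]$: one gets $\bnorm{\tilde f}_{\HK}^2=\norm{\tilde\gamma}_{\mathcal H_X}^2=\norm{J_a\tilde\gamma}_{L^2[0,b]}^2=\bnorm{\tilde f'_-}_{L^2[0,b]}^2$, while $\Theta(\tilde\gamma,u)=\int_{[a,b]}u(t)\,\tilde\gamma(dt)=\int_a^b u(t)\,d\bigl(-\tilde f'_-(t)\bigr)$, and likewise with $u_-$ in place of $u$. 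With these substitutions, assertion~1 is exactly \eqref{eq:noncrossingprobupperbound-verygeneral} and assertion~2 is exactly \eqref{eq:lower}.

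For the asymptotics in assertion~3 I would invoke Theorem~\ref{thm:assym-gen}, which requires (D) and (U) beyond (G1)--(G3). Condition (D), the injectivity of $\mathcal R$, I would read off directly from \eqref{eq:cov-wiener-ab} (routine, and skipped as announced at the start of the section). Condition (U) is automatic here: since $0\notin[a,b]$ we have $\T_0=\varnothing$, so the sufficient condition ``$u>0$ on $\T_0$'' of Remark~\ref{prop:U} holds vacuously and (U) follows. Then \eqref{eq:P_cf-asymp}, after the same translations of the norm and of $\Theta(\tilde\gamma,u)$, specializes to the stated two-term expansion.

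The main obstacle is the step asserting that the left derivative of the least non-decreasing concave majorant is constant on $[0,a]$; this is exactly what guarantees that $\tilde f$ is the image under $\mathcal R$ of a measure carried by $\T_1=[a,b]$ rather than one charging the excluded region below $a$. I would argue it from the affinity of the extended $f$ on $[0,a]$ together with concavity and minimality of $\tilde f$: any non-affine behaviour of $\tilde f$ on $[0,a]$ could be replaced by the affine interpolation through its endpoint values without destroying concavity, monotonicity, or the majorant property, contradicting minimality, and forcing slope $f(a)/a=\tilde\gamma([a,b])$ there. The residual care is purely Stieltjes/boundary bookkeeping (left- versus right-continuous versions, the value at $t=a$, and the sign in $\tilde\gamma=d(-\tilde f'_-)$), which leaves the final expressions unchanged.
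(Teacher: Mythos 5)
Your proposal is correct and follows essentially the same route as the paper: the paper's ``proof'' is precisely the discussion preceding the statement (the isometry $J_a$, the linear extension of $f$ to $[0,a]$, constancy of $\tilde f'$ on $[0,a]$ so that $\tilde f=\mathcal R\tilde\gamma$ with $\tilde\gamma\in M^+([a,b])$, condition (G2) via Lemma~\ref{thm:concmaj}, then Theorems~\ref{thm:general} and~\ref{thm:assym-gen}), and you even supply the chord-replacement argument for the constancy step that the paper dismisses as ``clearly''. One minor slip: the constant slope of $\tilde f$ on $[0,a]$ is $\tilde f(a)/a=\tilde\gamma([a,b])$, not $f(a)/a$ (these differ whenever $\tilde f(a)>f(a)$), but this is immaterial since only constancy of the slope, not its value, is needed to conclude $\tilde\gamma\in M^+([a,b])$.
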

\begin{remark}
	Actually, this example can be compared with the previous one. Namely, we can informally write 
	$$
	P_{f,u} = \pr\big(\forall t\in[a,b]\ \ W_t + f(t)\le u(t)\big) \approx 
	\pr\big(\forall t\in[0,b]\ \ W_t + f(t)\le u(t)\big),
	$$
	with some $f$, which has large negative values on $[0,a)$. Of course, the latter is impossible if $f(a)>0$, since $f$ must be continuous, but with suitable approximation argument it is possible to derive Theorem \ref{thm:wiener[a,b]} from Theorem~\ref{thm:wiener[0,T]}.
\end{remark}

\subsubsection{$0\in[a,b]$}

Now $a<0$, $b>0$, $\T_0=\{0\}$, $\T_1 = [a,0)\cup(0,b]$. The primary space is $C_0([a,b];\{0\}) = \set{f\in C([a,b]): f(0)=0}$ with dual $M([a,0)\cup(0,b])$.  

The covariance function is equal to $R(t,s) = t\wedge s$, for $t,s>0$, $- (t\vee s)$ for $t,s<0$, and $0$ if $ts\le 0$. Then for $t\ge 0$ the covariance operator is 
\begin{equation*} 
\begin{gathered}
\mathcal R \mu(t) = \int_a^b R(t,s)\mu(ds) = \int_0^b \int_{0}^t \ind{[0,s]}(u) du \mu(ds) = \int_0^t \mu([u,b])du = \int_0^t J \mu (u)du,
\end{gathered}
\end{equation*}
where $J\mu(u) = \mu([u,b])$, $u\in [0,b]$. Similarly, for $t\in[a,0)$
\begin{equation*}
\mathcal R \mu(t) = \int_t^0 \mu ([a,u])du,
\end{equation*}
where $J\mu(u) = \mu([a,u])$, $u\in [a,0)$, and
\begin{gather*}
\sprod{\mathcal R \mu,\nu} = \big(J\mu,J\nu\big)_{L^2[a,b]}.
\end{gather*}

Consequently, the operator  $J$ extends to an isomorphism between $\mathcal H_X$ and $L^2[a,b]$ and 
$$\mathbb H_X = \set{f\in AC([a,b]): f(0)=0, f'\in L^2[a,b]}.$$ 
As a result, we get a similar situation as for $[0,b]$. The difference is that now the function $\tilde f$ is non-decreasing and concave on $[0,b]$ but non-increasing and concave on $[a,0]$, so it can be ``glued'' together from the least non-decreasing concave majorant of $f$ on $[0,b]$ and the least non-increasing concave majorant on $[a,0]$. The bounds and the asymptotic behavior we obtain are  similar to the previous statements, so we skip the formulation. The important fact we should mention is that the values of $W$ on $[a,0]$ and $[0,b]$ are independent, so we can write
\begin{gather*}
\pr(W+f\le u \text{ on }[a,b]) = \pr(W+f\le u \text{ on }[a,0])\cdot \pr(W+f\le u \text{ on }[0,b])
\end{gather*}
and apply Theorem~\ref{thm:wiener[0,T]}. The results will agree with those obtained by direct application of the general theory, since  
$$
\bnorm{\tilde f'}^2_{L^2[a,b]} = \bnorm{\tilde f'}^2_{L^2[a,0]} + \bnorm{\tilde f'}^2_{L^2[0,b]}
$$
and 
$$
\int_a^b u(t)d\bigl(-\tilde f'(t)\bigr) = \int_a^0 u(t)d\bigl(-\tilde f'(t)\bigr) + \int_0^b u(t)d\bigl(-\tilde f'(t)\bigr).
$$

\subsection{Brownian bridge}

For convenience in this example we work with  $\mathbb{T}=[0,1]$. Let $X_t = B_t^0:= W_t - tW_1$, $t\in[0,1]$, be a Brownian bridge, which is a centered Gaussian process with covariance function $R(t,s) = \min(t,s)-ts$. The primary space is now 
$$C_{0,0}([0,1]) = \set{x\in C([0,1]): x(0)=x(1)=0},$$
with the dual space $M((0,1))$. Further, similarly to \eqref{eq:cov-wiener} the  covariance operator  is given by 
\begin{equation}\label{eq:cov-bb}
\begin{gathered}
\mathcal R \mu (t) = \int_0^1 \bigl(\min(t,s) - ts\bigr)\mu(ds) = \int_0^t \mu([s,1)) ds -  t \int_0^1 \int_0^s du\,\mu(ds) =\\
\int_0^t \mu([s,1)) ds -  t \int_0^1 \mu([u,1))du = \int_{0}^t \left(\mu((s,1)) - \int_0^1 \mu([u,1))du \right)ds = \int_{0}^t J_0 \mu(s) ds,
\end{gathered}
\end{equation}
where $J_0 \mu(s) = \mu([s,1)) - \int_0^1 \mu([u,1))du$.
Using simple transformations, we obtain 
\begin{gather*}
\sprod{\mathcal R \mu,\nu} = \int_0^b J_0\mu(u)J_0\nu(u)du = \big(J_0\mu,J_0\nu\big)_{L^2[0,1]}.
\end{gather*}
Consequently,   $J_0$ extends to an isometry between $\mathcal H_X$ and the completion of image of $J_0$ in $L^2[0,1]$, which easily seen to be 
$$L_0^2[0,1]:= \set{f\in L^2[0,1]: \int_0^1 f(t)dt=0}.$$ 
Hence, in view of \eqref{eq:cov-bb}, the Cameron--Martin space $\mathbb H_X = \mathcal R \mathcal H_X$ consists of absolutely continuous functions having square integrable derivative and vanishing at 0 and 1, which agrees with  the well known description of this RKHS, see e.g.\ \cite[Example 4.9]{lifshits}. Similarly to the previous example, the drift $\tilde f$ from Theorem 2.9 should satisfy $\tilde f\ge f$ and 
$$
\big(f'-\tilde f',\tilde f'\big)_{L^2[0,1]}\ge 0. 
$$
By Lemma~\ref{lem:concmajbb}, this is true for the least concave majorant of $f$, which is also a solution to~\eqref{eq:minprob}. So again we reproduce the known results for Brownian bridge, see \cite{BH05,BHHM03,BMHH03}.
\begin{theorem}
	Let $u\colon [0,1]\to \R$ be lower semicontinuous,  $f\in AC([0,1])$ be such that $f(0)=f(1) =0$, $f'\in L^2[0,1]$, $\tilde f$ be the least concave majorant of $f$, and $\tilde f'_-$ be its left derivative (right derivative at $0$).
	
	1. The probability $P_{f,u} = \pr(\forall t\in[0,1]\ B_t^0+f(t)\le u(t))$ admits the upper bound
	\begin{equation*} 
	P_{f,u} \le P_{f-\tilde f,u} \exp \Bigl \{- \frac{1}{2}\bnorm{\tilde f'_-}_{L^2[0,1]}^2 + \int_0^1 u(t) d\bigl(-\tilde f'_-(t)\bigr)\Bigr \}.
	\end{equation*}
	
	2. For any $u_{-}\in C([0,1])$ such that $u_{-}(t) < u(t)$ for all $t\in [0,1]$ and 
	$$
	P_{0,u,u_-} := \pr\big(\forall t\in[0,1]\ \ u_{-}(t)\le B^0_t\le u(t)\big)>0,
	$$
	the probability $P_{f,u}$ admits the lower bound 
	\begin{equation*}
	\begin{gathered}
	P_{f,u} \ge P_{0,u,u_{-}}  \exp \Bigl  \{ - \frac{1}{2} \bnorm{ \tilde f'_-}_{L^2[0,1]}^2 + \int_0^1 u_-(t) d\bigl(-\tilde f'_-(t)\bigr) \Bigr \}.
	\end{gathered}
	\end{equation*}
	
	3. If $u(0),u(1)>0$, then we have  
	$$
	\CE{\log}\,  P_{\cY f,u}\sim - \frac{\cY ^2}{2} \bnorm{ \tilde f'_-}_{L^2[0,1]}^2 + \cY \int_0^1 u(t) d\bigl(-\tilde f'_-(t)\bigr)+o(\cY), \quad \cY \to+\infty. 
	$$
	
\end{theorem}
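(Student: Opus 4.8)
The plan is to obtain all three parts by specializing the general results of Subsections~\ref{subsec:bounds} and \ref{sec:general}, using the explicit description of the reproducing kernel Hilbert space derived above together with the variational property of the least concave majorant recorded in Lemma~\ref{lem:concmajbb}. First I would fix notation: by \eqref{eq:cov-bb} the covariance operator acts as $\mathcal R\mu(t)=\int_0^t J_0\mu(s)\,ds$, the map $J_0$ is an isometry onto $L^2_0[0,1]$, and $\HK$ consists of absolutely continuous $h$ with $h(0)=h(1)=0$ and $h'\in L^2[0,1]$, for which $\norm{h}_{\HK}=\norm{h'}_{L^2[0,1]}$. Writing $\tilde f$ for the least concave majorant of $f$, I would take $\tilde\gamma$ to be the non-negative measure $-d\tilde f'_-$ on $\T_1=(0,1)$ and verify that $\mathcal R\tilde\gamma=\tilde f$, i.e.\ $J_0\tilde\gamma=\tilde f'_-$. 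Concretely this reduces to checking $\tilde\gamma([s,1))=\tilde f'_-(s)-\tilde f'_-(1-)$, whose consistency follows from $\int_0^1\tilde f'_-(s)\,ds=\tilde f(1)-\tilde f(0)=0$, so that the centering constant $\int_0^1\tilde\gamma([u,1))\,du$ built into $J_0$ equals $-\tilde f'_-(1-)$ as required.

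Next I would verify conditions (G1)--(G3). Condition (G3) is immediate since $\tilde f\ge f$. For (G1), concavity of $\tilde f$ makes $\tilde f'_-$ non-increasing, so $\tilde\gamma=-d\tilde f'_-\in M^+(\T_1)$; here one also needs $\tilde f\in\HK$. The endpoint conditions $\tilde f(0)=\tilde f(1)=0$ hold because the least concave majorant agrees with $f$ at the two endpoints of $[0,1]$ (there are no points of the graph outside $[0,1]$ available to raise the upper concave envelope there), and $\tilde f'\in L^2[0,1]$ is inherited from $f'\in L^2[0,1]$: on each maximal gap interval $(a,b)$ where $\tilde f>f$ the majorant is affine with slope $\tfrac1{b-a}\int_a^b f'$, so by Cauchy--Schwarz $\int_a^b(\tilde f')^2\le\int_a^b(f')^2$, while $\tilde f'=f'$ on the contact set, giving $\norm{\tilde f'_-}_{L^2[0,1]}\le\norm{f'}_{L^2[0,1]}$. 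Condition (G2), in the equivalent form $\bprod{f'-\tilde f'_-,\tilde f'_-}_{L^2[0,1]}\ge 0$, is exactly the content of Lemma~\ref{lem:concmajbb} (in fact it holds with equality, which simultaneously identifies $\tilde f$ with the solution of \eqref{eq:minprob}).

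With (G1)--(G3) established, parts~1 and~2 are the direct specialization of Theorem~\ref{thm:general}, substituting $\norm{\tilde f}_{\HK}^2=\norm{\tilde f'_-}_{L^2[0,1]}^2$ and $\Theta(\tilde\gamma,u)=\int_0^1 u(t)\,d(-\tilde f'_-(t))$ (and likewise for $u_-$); note that these bounds require neither (D) nor (P). For part~3 I would apply Theorem~\ref{thm:assym-gen}, since we already possess $\tilde f=\mathcal R\tilde\gamma$ satisfying (G1)--(G3); it then remains to check (U) and (D), the latter being among the routine verifications skipped throughout this section. Because $R(t,t)=t(1-t)$ vanishes precisely at $t\in\{0,1\}$, we have $\T_0=\{0,1\}$, so the hypotheses $u(0)>0$ and $u(1)>0$ say exactly that $u>0$ on $\T_0$; by Remark~\ref{prop:U} this guarantees (U), and Theorem~\ref{thm:assym-gen} yields the stated asymptotics.

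I expect the main obstacle to be the rigorous identification of $\tilde\gamma$ as a genuine finite non-negative measure on the \emph{open} interval $(0,1)$ with $\mathcal R\tilde\gamma=\tilde f$: the additive constant in $J_0$ (the Brownian-bridge analogue of the centering $\int_0^1 h'=0$) must be reconciled with the two-sided boundary behaviour of $\tilde f'_-$, and one must rule out an infinite slope of the concave majorant near $0$ or $1$ that would eject $\tilde f$ from $\HK$. This is precisely the point where the quantitative input of Lemma~\ref{lem:concmajbb} and the $L^2$-contraction estimate above are essential, the remainder being bookkeeping that transcribes the abstract bounds into the Stieltjes-integral form displayed in the statement.
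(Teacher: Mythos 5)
Your proposal is correct and follows essentially the same route as the paper: specialize Theorem~\ref{thm:general} and Theorem~\ref{thm:assym-gen} via the isometry $J_0$, identify $\tilde f$ with the least concave majorant and verify (G1)--(G3) through Lemma~\ref{lem:concmajbb}, and obtain (U) from $u(0),u(1)>0$ via Remark~\ref{prop:U} since $\T_0=\{0,1\}$, with (D) and (P) skipped as routine exactly as the paper does. You are in fact slightly more explicit than the paper at one point, namely the computation reconciling the centering constant in $J_0$ with $\tilde f'_-(1-)$ to show $\mathcal R\tilde\gamma=\tilde f$, which the paper compresses into ``similarly to the previous example.''
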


\subsection{Brownian motion on $[0,+\infty)$}
Let $X = W$, $\T = [0,+\infty)$. Now $\T$ is locally compact, so we should use the ideas of Subsection~\ref{sec:localcomp}. But first we transform the parameter space conveniently, setting 
$$
Y_t = W_{t/(1-t)}, t\in[0,1).
$$
Now we should multiply $Y$ by some positive function $v \in  C([0,1))$ so that 
$v(t)Y_t \to 0$, $t\to 1-$. It is not hard to see that $v(t) = 1-t$ works. As a result, we can write
$$
\pr\bigl(\forall t\ge 0\ W_t + f(t)\le u(t)\bigr) = \pr\bigl(\forall t\in [0,1]\ Z_t +\bar f(t)\le \bar u(t)\bigr),
$$
where 
$$Z_t = (1-t)W_{t/(1-t)}, \quad  \bar f(t) =(1-t) f\bigl(t/(1-t)\bigr),  \quad \bar u(t) =(1-t) u\bigl(t/(1-t)\bigr).$$
It appears that the process $Z$ is a Brownian bridge on $[0,1]$, so we reduce the problem to the previous example; the solution $\tilde f$ to the constrained optimization problem is now a least non-decreasing concave majorant, as in Example~\ref{ex:wiener[0,b]} (see also \cite[Lemma 5.1]{BHHM05}).

\subsection{Volterra process}
Consider  $X_t = \int_0^t K(t,s) dW_s$, $t\in [0,T]$, where the Volterra kernel $K$  is such that $\sup_{t\in[0,T]} \int_0^t K(t,s)^2 ds<\infty$ and $X$ has continuous sample paths. 
In this case for any finite signed measure $\mu$ on $[0,T]$
\begin{gather*}
\mathcal R \mu(t) = \int_0^T R(t,s) \mu(ds) = \int_0^T \int_0^{t\wedge s} K(t,u) K(s,u) du\, \mu(ds) \\
= \int_0^t K(t,u) \int_u^T K(s,u)\mu(ds) du.
\end{gather*}
Consequently,  the covariance operator admits the following decomposition 
$\mathcal R = \mathcal K\mathcal K^*$, where 
$$
\mathcal K f(t) = \int_0^t K(t,s)f(s)ds,\quad 
\mathcal K^* \mu (s) = \int_s^T K(t,s)\mu(dt).
$$
Moreover,  we have 
\begin{gather*}
\sprod{\mu,\nu} = \int_0^T \int_0^T R(t,s) \mu(ds)\nu(ds) = \int_0^T \int_0^T \int_0^{t\wedge s} K(t,u) K(s,u) du\, \mu(ds)\nu(ds)\\
= \int_0^T \int_u^T K(t,u)\mu(dt)\int_u^T K(s,u)\nu(ds) du = \bprod{\mathcal K^* \mu, \mathcal K^* \nu}^2_{L^2[0,T]}.
\end{gather*}
As a result, $\mathcal H_X$ can be identified with a preimage of $L^2[0,T]$ under $\mathcal K^*$, and $\mathbb H_X$, with the image of $L^2[0,T]$ under $\mathcal K$. Despite the seemingly clear, as in the previous examples, description of the Cameron--Martin space, it is in general hard  to identify  the solution of the minimization problem \eqref{eq:minprob}. (See, for example, the article \cite{HMS15}, which considers the boundary non-crossing probabilities for fractional Brownian motion, in particular Theorem 3.1 and Corollary 3.2 therein.) Of course, there is a viable case contained in Corollary~\ref{cor:positive}: for any $\gamma\in M([0,T])$ and $f=\mathcal R \gamma$, the asymptotic expansion \eqref{eq:P_cf-asymp} holds, however, the Volterra structure does not really help here. 

\subsection{Brownian sheet}

Let $X$ be a Brownian sheet, i.e.\ a centered Gaussian process indexed by $\T = [0,T]^2$ and having the covariance function
$$
R\bigl((t_1,t_2),(s_1,s_2)\bigr) = \min(t_1, s_1)\cdot \min(t_2, s_2), (t_1,t_2), (s_1,s_2)\in \T. 
$$
Now $\T_0=(\{0\}\times[0,T])\cup ([0,T]\times \{0\})$, $\T_1 = \T\setminus \T_0 = (0,T]^2$. The primary space is $C_0([0,T]^2;\T_0) = \set{x\in C([0,T]): \forall t\in[0,T]\ x(0,t) = x(t,0)=0}$ with dual $M((0,T]^2)$, the space of finite signed measures on $(0,T]^2$. Similarly to Example~\ref{ex:wiener[0,b]}, the covariance operator is
\begin{equation*} 
\begin{gathered}
\mathcal R \mu(t_1,t_2) = \int_0^T\int_0^T \min (t_1,s_1)\min (t_2,s_2) \mu(ds_1,ds_2) \\
= \int_0^{t_1}\int_0^{t_2} \mu\bigl([u_1,T]\times[u_2,T]\bigr)du_2 du_1 = \int_0^{t_1}\int_0^{t_2} J_2\mu (u_1,u_2)du_2 du_1,
\end{gathered}
\end{equation*}
where $J_2\mu (u_1,u_2) = \mu\bigl([u_1,T]\times[u_2,T]\bigr)$, 
and
\begin{gather*}
\sprod{\mathcal R \mu,\nu} = \big(J_2\mu,J_2\nu\big)_{L^2\left([0,T]^2\right)},
\end{gather*}
so  $J_2$ extends to an isomorphism between $\mathcal H_X$ and $L^2\bigl([0,T]^2\bigr)$. Therefore, $\mathbb H_X$ consists of functions of the form $\int_0^{t_1}\int_0^{t_2} h(u_1,u_2)du_1\,du_2$, where $h\in L^2\bigl([0,T]^2\bigr)$, so again we get the well-known description of the Cameron-Martin space of Brownian sheet. 

We do not know the solution to the optimization problem \eqref{eq:minprob} in general, only in two particular case. The first case is where $f$ is itself the solution, then we have an ad hoc version of Corollary~\ref{cor:positive}. 
\begin{theorem} 
	Let $X$ be a Brownian sheet, $u\colon [0,T]^2\to \R$ be a lower semicontinuous function such that $u(0,t)>0$ and $u(t,0)>0$ for all $t\in[0,T]$, $\gamma$ be a finite non-negative measure on $(0,T]^2$, and 
	$$
	f(t_1,t_2) = \int_0^{t_1}\int_0^{t_2} J_2\gamma(u_1,u_2)du_2\, du_1 = \int_0^{t_1}\int_0^{t_2} \gamma\bigl([u_1,T]\times[u_2,T]\bigr)du_2 du_1.
	$$
	Then, the following asymptotics holds:
	\begin{gather*}
	\log\, \pr\bigl( \forall t_1,t_2\in[0,T]\ \ X_{t_1,t_2}+\cY f(t_1,t_2)\le u(t_1,t_2)\bigr)\\
	= - \frac{\cY^2}{2} \bnorm{ J_2 \gamma}_{L^2\left([0,T]^2\right)}^2 + \cY \int_0^T\int_0^T u(t_1,t_2) \gamma(dt_1,dt_2) + o(\cY), \quad \cY\to+\infty. 
	\end{gather*}
\end{theorem}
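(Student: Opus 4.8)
The plan is to read this statement off as a direct instance of Corollary~\ref{cor:positive}, so that the only genuine work is to verify the standing hypotheses (D) and (U) for the Brownian sheet and then to rewrite the two coefficients in the concrete form displayed. First I would observe that the given $f$ is precisely $\mathcal R\gamma$: by the expression for the covariance operator computed above for this example,
\[
\mathcal R\gamma(t_1,t_2) = \int_0^{t_1}\int_0^{t_2} J_2\gamma(u_1,u_2)\,du_2\,du_1 = \int_0^{t_1}\int_0^{t_2}\gamma\bigl([u_1,T]\times[u_2,T]\bigr)\,du_2\,du_1,
\]
which is exactly the definition of $f$. Since $\gamma$ is by assumption a finite non-negative measure on $(0,T]^2=\T_1$, we have $\gamma\in M^+(\T_1)$, so $f=\mathcal R\gamma$ is the image of a non-negative measure and we land precisely in the scope of Corollary~\ref{cor:positive} with $\tilde\gamma=\gamma$ (here $f$ itself solves the minimization problem \eqref{eq:minprob}, so no further identification of $\tilde f$ is needed).

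Next I would verify the two assumptions. For (D), injectivity of $\mathcal R$ on $M((0,T]^2)$ reduces to injectivity of $J_2$: if $\mathcal R\mu=0$, then differentiating the double integral gives $J_2\mu(u_1,u_2)=\mu([u_1,T]\times[u_2,T])=0$ for all $(u_1,u_2)$, and since this bivariate survival function determines a signed measure (its values on rectangles are recovered by finite differences), this forces $\mu=0$. For (U) I would invoke Remark~\ref{prop:U}: under (D), a lower semicontinuous $u$ satisfies (U) as soon as $u>0$ on the zero set $\T_0$. Here $\T_0=(\{0\}\times[0,T])\cup([0,T]\times\{0\})$, and the hypotheses $u(0,t)>0$ and $u(t,0)>0$ for all $t\in[0,T]$ are exactly the requirement $u>0$ on $\T_0$, so (U) holds.

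With (D), (U), $\gamma\in M^+(\T_1)$ and $f=\mathcal R\gamma$ all in hand, Corollary~\ref{cor:positive} yields \eqref{eq:P_cf-asymp} with $\tilde\gamma=\gamma$, and it remains only to make the two coefficients explicit. The isometry identity $\sprod{\mathcal R\mu,\nu}=\pprod{J_2\mu,J_2\nu}_{L^2([0,T]^2)}$ established above gives
\[
\norm{\gamma}^2_{\HKX} = \sprod{\mathcal R\gamma,\gamma} = \norm{J_2\gamma}^2_{L^2([0,T]^2)},
\]
which is the quadratic term, while $\Theta(\gamma,u)=\int_{\T_1} u\,d\gamma=\int_0^T\int_0^T u(t_1,t_2)\,\gamma(dt_1,dt_2)$ is the linear term, completing the identification. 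I expect no genuinely hard step here: the whole difficulty has already been absorbed into the general asymptotics of Theorem~\ref{thm:assym-gen}, and what remains is the routine verification of injectivity of $\mathcal R$ and of positivity of $u$ on $\T_0$, the latter being the only place where the geometry of the parameter square actually enters.
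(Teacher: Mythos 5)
Your proposal is correct and takes essentially the same route as the paper, which dispatches this theorem in one line as ``an ad hoc version of Corollary~\ref{cor:positive}'' applied with $\tilde\gamma=\gamma$ and $\tilde f=f=\mathcal R\gamma$, exactly as you do. The only difference is that you additionally spell out the verifications of (D) (injectivity via $J_2$) and of (U) (via Remark~\ref{prop:U}, using $u>0$ on $\T_0$), which the paper explicitly declares routine and skips at the start of Section~\ref{sec:examples}.
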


The second case is $f(t_1,t_2) = f_1(t_1)\cdot f_2(t_2)$ with non-negative $f_1,f_2$ belonging to the RKHS of Wiener space, i.e., $f_i = \int_0^t h_i(s) ds$ with $h_i \in L^2[0,T]$, $i=1,2$. In this case, the solution to the optimization problem is $\tilde f(t_1,t_2) = \tilde f_1(t_1)\cdot \tilde f_2(t_2)$, where $\tilde f_i$ is the smallest non-decreasing concave majorant of $f_i$, $i=1,2$. Indeed, $\tilde f\ge f$ and, thanks to Lemma~\ref{thm:concmaj} 
$$
\int_0^T \tilde f_i'(s) \bigl( f_i'(s) - \tilde f_i'(s)\bigr) ds = 0,\quad i=1,2,
$$
hence
\begin{gather*}
\int_0^T\int_0^T f_1'(s_1)f_2'(s_2) \tilde f_1'(s_1)\tilde f_2'(s_2)  ds_1 ds_2 \\
= \int_0^T f_1'(s_1)\tilde f_1'(s_1)ds_1\int_0^T f_2'(s_2) \tilde f_2'(s_2)  ds_2 \\
= \int_0^T\tilde  f_1'(s_1)^2 ds_1\int_0^T\tilde  f_2'(s_2)^2 ds_2 =
\int_0^T\int_0^T \tilde f_1'(s_1)^2\tilde f_2'(s_2)^2  ds_1 ds_2,
\end{gather*}
equivalently,
\begin{gather*}
\int_0^T\int_0^T  \bigl( f_1'(s_1)f_2'(s_2) -  \tilde  f_1'(s_1)\tilde  f'_2(s_2)\bigr) \tilde f_1'(s_1)\tilde f_2'(s_2)  ds_1 ds_2 =0.
\end{gather*}
As in Example~\ref{ex:wiener[0,b]}, assuming that $f = \mathcal R \gamma$, $\tilde f = \mathcal R \tilde \gamma$, the last equality is equivalent to (G2). Thus, noting that $$\norm{\tilde \gamma}_{\HK} = \bnorm{\tilde f}_{L^2\left([0,T]^2\right)} = \bnorm{\tilde f_1}_{L^2[0,T]}\cdot \bnorm{\tilde f_1}_{L^2[0,T]},$$  we arrive at the following statement. 
\begin{theorem} 
	Let $X$ be a Brownian sheet, $u\colon [0,T]^2\to \R$ be a lower semicontinuous such that $u(0,t)>0$ and $u(t,0)>0$ for all $t\in[0,T]$. Let also $f_1,f_2\in AC([0,T])$ be  non-negative functions such that $f_i(0) =0$ and $f_i' \in L^2[0,T]$, $i=1,2$, and $\tilde f_1,\tilde f_2$ be their least concave non-decreasing majorants. 	
	Then, 
	the following asymptotics holds as $\cY\to+\infty$:
	\begin{gather*}
	\log\, \pr\bigl( \forall t_1,t_2\in[0,T]\ \ X_{t_1,t_2}+\cY f_1(t_1)f_2(t_2)\le u(t_1,t_2)\bigr)\\
	= - \frac{\cY^2}{2} \bnorm{ \tilde f_1}_{L^2 [0,T]}^2\cdot \bnorm{ \tilde f_2}_{L^2[0,T]}^2 + \cY\int_0^T\int_0^T u(t_1,t_2) d\tilde f_1'(t_1) d\tilde f_2' (t_2) + o(\cY). 
	\end{gather*}
\end{theorem}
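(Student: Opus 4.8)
The plan is to apply Theorem~\ref{thm:assym-gen}: it suffices to exhibit a measure $\tilde\gamma\in M^+(\T_1)$ whose covariance image $\tilde f=\mathcal R\tilde\gamma$ equals $\tilde f_1\tilde f_2$ and satisfies (G1)--(G3), and then to check that the boundary $u$ satisfies (U). First I would record that the drift lies in the Cameron--Martin space: writing $f_i=\int_0^\cdot h_i$ with $h_i\in L^2[0,T]$, the product satisfies $f(t_1,t_2)=f_1(t_1)f_2(t_2)=\int_0^{t_1}\int_0^{t_2}h_1(u_1)h_2(u_2)\,du_1\,du_2$, so $f\in\HK$ with $J_2$-image $h_1\otimes h_2 = f_1'\otimes f_2'$ (writing $(g_1\otimes g_2)(u_1,u_2)=g_1(u_1)g_2(u_2)$); in particular $f=\mathcal R\gamma$ for the $\gamma$ with $J_2\gamma=f_1'\otimes f_2'$.

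Next I would construct the candidate. By Theorem~\ref{thm:concmaj}, each $\tilde f_i$ is non-decreasing and concave with $\tilde f_i(0)=0$ and non-increasing square-integrable derivative $\tilde f_i'$, so $\tilde f_i=\int_0^\cdot\tilde f_i'$ lies in the Wiener Cameron--Martin space and $\tilde f_i'=J\gamma_i$ for the non-negative measure $\gamma_i=-d\tilde f_i'$ on $(0,T]$, determined by $\gamma_i([u,T])=\tilde f_i'(u)$. Setting $\tilde\gamma=\gamma_1\otimes\gamma_2$, a non-negative measure on $\T_1=(0,T]^2$, one has $J_2\tilde\gamma=\tilde f_1'\otimes\tilde f_2'$, hence $\mathcal R\tilde\gamma(t_1,t_2)=\int_0^{t_1}\int_0^{t_2}\tilde f_1'(u_1)\tilde f_2'(u_2)\,du_1\,du_2=\tilde f_1(t_1)\tilde f_2(t_2)=\tilde f$; this gives (G1). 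Condition (G3) is immediate from $\tilde f_i\ge f_i\ge 0$, which yields $\tilde f=\tilde f_1\tilde f_2\ge f_1f_2=f$. For (G2) I would use the isometry $\sprod{\mathcal R\mu,\nu}=\pprod{J_2\mu,J_2\nu}_{L^2([0,T]^2)}$ to rewrite $\sprod{f-\tilde f,\tilde\gamma}=\pprod{f_1'\otimes f_2'-\tilde f_1'\otimes\tilde f_2',\,\tilde f_1'\otimes\tilde f_2'}_{L^2}$, which factorizes and vanishes by the one-dimensional identity $\int_0^T\tilde f_i'(f_i'-\tilde f_i')\,ds=0$ from Theorem~\ref{thm:concmaj} — exactly the computation displayed immediately before the theorem statement.

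It then remains to check (U) and assemble the constants. Since the Brownian sheet has full support, (D) holds, and because the hypothesis gives $u>0$ on $\T_0=(\{0\}\times[0,T])\cup([0,T]\times\{0\})$, Remark~\ref{prop:U} yields (U). Theorem~\ref{thm:assym-gen} now delivers \eqref{eq:P_cf-asymp}, and I would finish by evaluating the two constants. By Fubini, $\norm{\tilde\gamma}^2_{\HKX}=\bnorm{J_2\tilde\gamma}^2_{L^2([0,T]^2)}=\bnorm{\tilde f_1'}^2_{L^2[0,T]}\,\bnorm{\tilde f_2'}^2_{L^2[0,T]}$, while $\Theta(\tilde\gamma,u)=\int_0^T\int_0^T u\,d\gamma_1\,d\gamma_2=\int_0^T\int_0^T u(t_1,t_2)\,d\tilde f_1'(t_1)\,d\tilde f_2'(t_2)$, the two sign changes in $\gamma_i=-d\tilde f_i'$ cancelling; this matches the asserted expansion.

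The main obstacle is the verification of (G1), namely that the natural candidate $\tilde f_1\tilde f_2$ is genuinely $\mathcal R$ applied to a non-negative measure on $\T_1$ rather than merely an element of $\HKX$. Here the product structure of the drift is essential: it lets one write $\tilde\gamma$ explicitly as the tensor product $\gamma_1\otimes\gamma_2$ of the one-dimensional projection measures, which makes non-negativity transparent and bypasses any direct appeal to assumption (P), and it is also what reduces the orthogonality (G2) to the single-variable statement of Theorem~\ref{thm:concmaj}. The only delicate point is that each $\gamma_i=-d\tilde f_i'$ be a \emph{finite} non-negative measure (equivalently that $\tilde f_i'$ be of bounded total variation on $(0,T]$); this is the one place where one must invoke the one-dimensional theory carefully, the remaining steps being the routine Fubini and approximation arguments indicated above.
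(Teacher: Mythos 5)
Your proposal is correct and follows the paper's own argument essentially step for step: the paper likewise takes $\tilde f=\tilde f_1\tilde f_2$, verifies (G2) by the same tensor factorization of the $L^2\bigl([0,T]^2\bigr)$ inner product using the one-dimensional orthogonality $\int_0^T \tilde f_i'\,(f_i'-\tilde f_i')\,ds=0$ from Lemma~\ref{thm:concmaj}, obtains (G1) and (G3) from the product structure and non-negativity, and then applies Theorem~\ref{thm:assym-gen} with (U) supplied by Remark~\ref{prop:U} since $u>0$ on $\T_0$. The only differences are that you make the measure $\tilde\gamma=\gamma_1\otimes\gamma_2$ fully explicit and that you flag the finiteness of $\gamma_i=-d\tilde f_i'$ (equivalently $\tilde f_i'(0+)<\infty$) as a delicate point --- a subtlety the paper leaves implicit rather than resolving.
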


\appendix

\section{Auxiliary statements}

The following lemma summarizes properties of the least non-decreasing concave majorant. They are probably  well known, but here we write them for completeness.

\begin{lemma}\label{thm:concmaj}
	For a function $f\in AC([0,T])$ with $f(0)=0$, its  least  non-decreasing concave majorant $\tilde f$ exists and is also absolutely continuous with $\tilde f(0) = 0$. Moreover, if $f'\in L^2[0,T]$, then $\tilde f' \in L^2[0,T]$ and
	$$
	\int_0^T \big(f'(s) - \tilde f'(s)\big)\tilde f'(s) ds  = 0, \quad  \tilde f= \operatorname*{argmin}_{g \in \mathbb H_W, g\ge f} \norm{g'}_{L^2[0,T]},
	$$
	where
	$$\mathbb H_W = \set{g:[0,T]\to \R: f(t) = \int_0^t h(s)ds, \quad h\in L^2[0,T]}$$
	is the RKHS of a standard Wiener process $W$.
\end{lemma}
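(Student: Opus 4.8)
The plan is to construct $\tilde f$ explicitly, establish its regularity, and then identify it with the metric projection of $0$ onto $C_f$, so that the orthogonality relation and the $\operatorname{argmin}$ characterisation fall out of the variational inequality \eqref{eq:projection}. For existence I would take $\tilde f$ to be the pointwise infimum of the (nonempty) family of all non-decreasing concave functions dominating $f$; the infimum of concave functions is concave and the infimum of non-decreasing functions is non-decreasing, so $\tilde f$ is itself the least non-decreasing concave majorant. It is convenient to record that $\tilde f$ coincides with the running maximum $t\mapsto \sup_{s\le t}\hat f(s)$ of the ordinary least concave majorant $\hat f$ of $f$, which makes the structural analysis transparent. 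To obtain $\tilde f(0)=0$ and continuity at the endpoints I would dominate $f$ by affine functions $\ell_\delta(t)=\delta+K_\delta t$ with $K_\delta$ large (using $f(0)=0$ and continuity of $f$), which forces $\hat f(0)=0$ and $\limsup_{t\to 0+}\hat f(t)\le\delta$ for every $\delta>0$; the analogous estimate at $T$ shows $\hat f(T)=f(T)$. Since $\tilde f$ is concave it is locally Lipschitz on $(0,T)$, hence absolutely continuous on compact subintervals; continuity at the endpoints together with monotonicity then yields $\tilde f(t)=\int_0^t\tilde f'(s)\,ds$ and absolute continuity on all of $[0,T]$.

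The heart of the matter is the contact structure of $\tilde f$: on every component $(\alpha,\beta)$ of the open set $\{\tilde f>f\}$ carrying nonzero slope, $\tilde f$ is affine with slope $c=(f(\beta)-f(\alpha))/(\beta-\alpha)$, its endpoints being contact points where $\tilde f=f$ (this uses $\hat f(0)=f(0)$, $\hat f(T)=f(T)$, and that the first maximiser $t^*$ of $\hat f$ is a contact point); beyond $t^*$ the function $\tilde f$ is constant, so $\tilde f'=0$ there. Granting this, $\tilde f'\in L^2$ follows at once: on the contact set $\tilde f'=f'$ a.e., while on each affine component Cauchy--Schwarz gives $\int_\alpha^\beta(\tilde f')^2=c^2(\beta-\alpha)\le\int_\alpha^\beta(f')^2$, and summation yields $\int_0^T(\tilde f')^2\le\int_0^T(f')^2<\infty$. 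The orthogonality $\int_0^T(f'-\tilde f')\tilde f'\,ds=0$ comes from the same decomposition: the integrand vanishes a.e.\ on the contact set (where $f'=\tilde f'$) and on the flat part (where $\tilde f'=0$), while on each affine component $\int_\alpha^\beta(f'-c)c\,ds=c\bigl((f(\beta)-f(\alpha))-c(\beta-\alpha)\bigr)=0$.

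Finally, having $\tilde f\in C_f$ (it is AC, vanishes at $0$, has $L^2$ derivative, and dominates $f$), I would verify the projection inequality \eqref{eq:projection}, namely $\int_0^T\tilde f'(h'-\tilde f')\,ds\ge 0$ for every $h\in C_f$. Using the orthogonality just proved, this reduces to $\int_0^T\tilde f'(h'-f')\,ds\ge 0$; writing $\phi=h-f\ge 0$ with $\phi(0)=0$ and integrating by parts gives $\int_0^T\tilde f'\phi'\,ds=\tilde f'(T)\phi(T)+\int_{(0,T]}\phi\,d(-\tilde f')\ge 0$, since $\tilde f'\ge 0$ is non-increasing (so $d(-\tilde f')\ge 0$) and $\phi\ge 0$, the boundary term at $0$ vanishing because $\phi(0)=0$ and $\tilde f'(\varepsilon)\sqrt\varepsilon\to 0$. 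By uniqueness of the metric projection onto the closed convex set $C_f$ (Lemma~\ref{lemma:closed}), this identifies $\tilde f$ as the minimiser of \eqref{eq:minprob} for $W$.

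The main obstacle is precisely the structural, complementary-slackness fact isolated in the second paragraph: that $\tilde f$ is affine off the contact set, that the generating measure $-d\tilde f'$ is carried by $\{\tilde f=f\}$, and that the peak may be taken to be a contact point. Verifying this requires the standard but slightly delicate analysis of the least concave majorant (preservation of endpoint values, affineness on gaps, behaviour of the first maximiser), after which the absolute continuity, the $L^2$ bound, the orthogonality, and the integration-by-parts inequality are all routine.
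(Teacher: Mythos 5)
Your proposal is correct, and while its core coincides with the paper's --- decompose $[0,T]$ into the contact set $\set{\tilde f=f}$ (where $\tilde f'=f'$ a.e.) and the open gap components on which $\tilde f$ is affine with the chord slope $c=(f(\beta)-f(\alpha))/(\beta-\alpha)$, then obtain the orthogonality from $\int_\alpha^\beta(f'-c)c\,ds=0$ and $\tilde f'\in L^2$ from Jensen/Cauchy--Schwarz on each gap --- you deviate genuinely in two places. First, for endpoint regularity ($\tilde f(0)=0$, continuity, hence absolute continuity via monotone convergence of $\int_\eps^t\tilde f'$), the paper constructs a concave majorant $g(t)=\int_0^t h(s)\,ds$ from the monotone rearrangement $h$ of $f'$ and squeezes $f\le\tilde f\le g$; your affine barriers $\delta+K_\delta t$ are more elementary and avoid the rearrangement inequality entirely. (Relatedly, the paper reduces to non-decreasing $f$ via the running maximum of $f$ itself, while you pass through the running maximum of the least concave majorant $\hat f$; both work.) Second, for the argmin claim the paper first argues that the minimiser may be taken concave, non-decreasing, with terminal value $\tilde f(T)$, and then expands the norm; you instead verify the variational inequality \eqref{eq:projection} directly for \emph{every} $h\in C_f$, reducing via the already-proved orthogonality to $\int_0^T\tilde f'(h'-f')\,ds\ge0$ and integrating by parts against the non-negative measure $d(-\tilde f'_-)$, with the boundary term controlled by $\tilde f'(\eps)\sqrt{\eps}\to0$ --- which is indeed justified, since $\eps\,\tilde f'(\eps)^2\le\int_0^\eps\tilde f'(s)^2\,ds\to0$ as $\tilde f'$ is non-increasing and square integrable. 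This direct route is arguably cleaner: it avoids the paper's somewhat tersely justified reduction to concave competitors and exhibits $\tilde f$ explicitly as the metric projection of $0$ onto $C_f$, tying the lemma to \eqref{eq:projection} and Lemma~\ref{lemma:closed} of the general theory. One shared caveat: the structural facts that $\tilde f$ is affine on gap components with contact endpoints and that the first maximiser $t^*$ of $\hat f$ is a contact point are asserted rather than proved both by you and by the paper; you at least flag them as the delicate steps, and they are standard (e.g.\ via the concave-hull representation of the least concave majorant, or by noting that an affine piece attaining the maximum in the interior of a gap would be constant there, contradicting minimality of $t^*$).
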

\begin{proof}
	Let $t_0 = \operatorname{argmax}_{[0,T]} f$. The least non-decreasing concave majorant is non-decreasing on $[0,t_0]$ and constant on $[t_0,T]$ with $\tilde f(t_0) = f(t_0)$, so it is enough to prove the statement on $[0,t_0]$ given $t_0>0$. To simplify the notation, we will assume $t_0=T$.
	
	Since $\tilde f$ is non-decreasing on $[0,T]$ and exceeds $f$, it is not less than the least  non-decreasing majorant $\hat f(t) = \max_{s\in[0,t]} f(s)$ of $f$. Further, for all $x<y$, $\hat f(y)-\hat f(x)$ does not exceed the variation of $f$ on $x,y$, which is equal to $\int_x^y |f'(s)|ds$. Therefore, $\hat f$ is absolutely continuous with $|\hat f'(t)\le |f'(t)|$ a.e., in particular, $|\hat f'|$ is square integrable. Consequently, it is enough to prove the statement for a non-decreasing $f$ (equivalently, for non-negative $f'$).
	
	Let $h$ denote the monotone rearrangement of $f'$ (i.e.\ $h(t) = \sup\{x: \lambda(\{s\in[0,T]: f'(s)\ge x\})\le t\}$, $t\in[0,T]$). It is well-known that  $\int_0^T h(t)^2 dt = \int_0^T f'(t)^2 dt$ and for all $t\in[0,T]$
	\begin{equation}\label{ineq}
	\int_0^t h(s)ds \ge \int_0^t f'(s)ds = f(t).
	\end{equation}
	Since $h$ is non-increasing, $g(t):= \int_0^t h(s)ds$ is a non-decreasing concave majorant of $f$. Moreover, $g$ is continuous with $g(0) = 0$ and $g(T) = f(T)$. Therefore, $\tilde f$, being the least non-decreasing concave majorant, lies between $f$ and $g$, so it is also continuous at $0$ and $T$ with $\tilde f(0) = 0$, $\tilde f(T) = f(T)$. Since $\tilde f$ is non-decreasing, it can only have jump discontinuities, which, however, would condradict concavity, so it is continuous.
	
	Now let $Z = \{t\in[0,T]: \tilde f(t) = f(t)\}$. Since $f$ and $\tilde f$ are continuous, this set is closed with $\set{0,T}\subset Z$. Its complement is an open set, so it is a union of disjoint open intervals, say, $\bigcup_{n\ge 1} (a_n,b_n)$. Now for any $n\ge 1$, $\tilde f$ is affine on $[a_n,b_n]$ with $\tilde f(a_n)  = f(a_n)$, $\tilde f(b_n) = f(b_n)$. Therefore, denoting for any $n\ge1$ \ $\bar f_n = \frac{f(b_n)-f(a_n)}{b_n-a_n}$, we have  $$\int_{a_n}^{b_n}  \big( f'(s)- \tilde f'(s)\big)\tilde f'(s) ds = \bar f_n \int_{a_n}^{b_n} \big(f'(s) - \bar  f_n\big)ds = 0,
	$$
	whence
	\begin{gather*}
	\int_0^T \big(f'(s)- \tilde f'(s)\big)\tilde f'(s) ds = \int_Z  \big( f'(s) - \tilde f'(s)\big) \tilde f'(s) ds \\ + \sum_{n\ge 1} \int_{a_n}^{b_n} \big(f'(s) - \tilde f'(s)\big)\tilde f'(s)  ds = 0.
	\end{gather*}
	Since for each $n\ge 1$, $\int_{a_n}^{b_n} \tilde f'(s)^2 ds\le \int_{a_n}^{b_n} f'(s)^2 ds$ by Jensen's inequality, then $\tilde f'\in L^2[0,T]$ follows.
	
	Further, since $\bnorm{\tilde f'}_{L^2[0,T] } \le \norm{f'}_{L^2[0,T] } $, the minimiser of $\norm{g}_{L^2[0,T]}$ for all $g\ge f, g \in H_W$ belongs to the set $A_g:=\{ g\ge \tilde f, g\in H_W\}$ and $g$ is concave, non-decreasing. Consequently, it belongs also to the set $A_g^*= \{ g\in A_g, g(T)= \tilde f(T)\}$. For any  $ g \in A_g^*$ we have
	$$ \int_0^T  \big( g'(s) - \tilde f'(s)\big)\tilde f'(s) ds = \int_0^T \big(g(s)- \tilde f(s)\big) d(- \tilde f'(s)) \ge 0,
	$$
	hence
	$$   \norm{g'}_{L^2[0,T] }^2 = \bnorm{g'-\tilde f'}_{L^2[0,T] }^2 + \bnorm{\tilde f'}_{L^2[0,T] }^2 +
	\int_0^T \big(g'(s)- \tilde f'(s)\big) \tilde f'(s) ds \ge  \bnorm{\tilde f'}_{L^2[0,T] }^2 $$
	and therefore  the minimizer is unique and equals $\tilde f$  establishing the claim.
\end{proof}

The following statement for Brownian bridge is proved similarly and therefore we omit its proof.
\begin{lemma}\label{lem:concmajbb}
	For an absolutely continuous function $f\colon [0,T]\to\R$, with $f(0)=f(T)=0$, its least concave majorant $\tilde f$ is also absolutely continuous with $\tilde f(0) = \tilde f(T) = 0$. Moreover, if $f'\in L^2[0,T]$, then $\tilde f' \in L^2[0,T]$ and
	$$
	\int_0^T \tilde f'(s)\big(\tilde f'(s)- f'(s)\big) ds  = 0, \quad  \tilde f= \operatorname*{argmin}_{g \in \mathbb H_{B^0}, g\ge f} \norm{g'}_{L^2[0,T]},
	$$
	where
	$$\mathbb H_{B^0} = \set{g:[0,T]\to \R: f(t) = \int_0^t h(s)ds, h\in L^2[0,T], \int_0^T h(t)dt =0}$$
	is the RKHS of a Brownian bridge $B^0$.
\end{lemma}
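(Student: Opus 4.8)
The plan is to mirror the proof of Lemma~\ref{thm:concmaj}, taking advantage of the extra symmetry that $f(0)=f(T)=0$ forces $\int_0^T f'(s)\,ds=0$. The one structural simplification over the Wiener case is that I do not reduce to a non-decreasing $f$: here $\tilde f$ is a genuine concave ``bump'', so I keep the full (sign-changing) derivative. First I would exhibit an explicit competitor in $\mathbb H_{B^0}$. Let $h$ be the non-increasing (decreasing) rearrangement of $f'$ and set $g(t)=\int_0^t h(s)\,ds$. By the rearrangement inequality (cf.\ \eqref{ineq}) one has $g(t)\ge\int_0^t f'(s)\,ds=f(t)$, while monotonicity of $h$ makes $g$ concave; crucially, since rearrangement is measure preserving, $g(T)=\int_0^T h(s)\,ds=\int_0^T f'(s)\,ds=f(T)=0$ and $\norm{h}_{L^2[0,T]}=\norm{f'}_{L^2[0,T]}$. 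Thus $g$ is a concave majorant of $f$ with $g(0)=g(T)=0$ and $g'\in L^2[0,T]$, so $g\in\mathbb H_{B^0}$. Since the least concave majorant $\tilde f$ obeys $f\le\tilde f\le g$, it inherits the boundary values $\tilde f(0)=\tilde f(T)=0$; concavity gives continuity on $(0,T)$, and the squeeze between the continuous functions $f$ and $g$ at the endpoints yields continuity on all of $[0,T]$.

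Next I would derive the orthogonality relation together with $\tilde f'\in L^2[0,T]$. Let $Z=\set{t\in[0,T]:\tilde f(t)=f(t)}$, a closed set containing $\set{0,T}$, whose complement is a disjoint union of open intervals $(a_n,b_n)$ on each of which $\tilde f$ is affine with $\tilde f(a_n)=f(a_n)$ and $\tilde f(b_n)=f(b_n)$. On such an interval $\tilde f'$ equals the mean slope $\bar f_n=\big(f(b_n)-f(a_n)\big)/(b_n-a_n)$, so $\int_{a_n}^{b_n}\tilde f'(\tilde f'-f')\,ds=\bar f_n\int_{a_n}^{b_n}(\bar f_n-f')\,ds=0$; on $Z$ the two absolutely continuous functions $\tilde f$ and $f$ coincide, whence $\tilde f'=f'$ a.e.\ on $Z$. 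Summation over $Z$ and the intervals gives $\int_0^T\tilde f'(s)\big(\tilde f'(s)-f'(s)\big)\,ds=0$. Jensen's inequality on each $(a_n,b_n)$ gives $\int_{a_n}^{b_n}(\tilde f')^2\le\int_{a_n}^{b_n}(f')^2$, and together with $\tilde f'=f'$ on $Z$ this yields $\norm{\tilde f'}_{L^2[0,T]}\le\norm{f'}_{L^2[0,T]}<\infty$, so $\tilde f'\in L^2[0,T]$ and $\tilde f$ is absolutely continuous.

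For the minimization characterization I would first note that $\tilde f$ is itself admissible: $\tilde f(0)=\tilde f(T)=0$, $\tilde f'\in L^2[0,T]$, and $\int_0^T\tilde f'=\tilde f(T)-\tilde f(0)=0$, so $\tilde f\in\mathbb H_{B^0}$. For any competitor $g\in\mathbb H_{B^0}$ with $g\ge f$ I would expand $\norm{g'}_{L^2[0,T]}^2=\norm{g'-\tilde f'}_{L^2[0,T]}^2+\norm{\tilde f'}_{L^2[0,T]}^2+2\int_0^T(g'-\tilde f')\tilde f'\,ds$ and show the cross term is non-negative. Integrating by parts and using $g(0)-\tilde f(0)=g(T)-\tilde f(T)=0$ to annihilate the boundary terms gives $\int_0^T(g'-\tilde f')\tilde f'\,ds=\int_0^T(g-\tilde f)\,d(-\tilde f')$. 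Since $\tilde f'$ is constant on each $(a_n,b_n)$, the non-negative measure $d(-\tilde f')$ is supported on $Z$, where $g-\tilde f=g-f\ge 0$; hence the integral is non-negative, so $\norm{g'}_{L^2[0,T]}\ge\norm{\tilde f'}_{L^2[0,T]}$, with equality forcing $g'=\tilde f'$ a.e.\ and, via $g(0)=\tilde f(0)=0$, forcing $g=\tilde f$.

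The main obstacle is the last step: the decisive point is recognizing that $d(-\tilde f')$ charges only the contact set $Z$, which is precisely what upgrades the mere majorization $g\ge f$ to $g-\tilde f\ge 0$ on the support of the measure, thereby both proving optimality and pinning down $\tilde f$ as the \emph{unique} minimizer. Everything else is a faithful transcription of the argument for Lemma~\ref{thm:concmaj}, the only genuine novelty being the observation that $f(T)=0$ makes the rearranged primitive $g$ automatically satisfy the mean-zero constraint defining $\mathbb H_{B^0}$.
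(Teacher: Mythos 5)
Your proof is correct and is essentially the paper's own argument: the paper omits the proof of Lemma~\ref{lem:concmajbb}, stating it is ``proved similarly'' to Lemma~\ref{thm:concmaj}, and your write-up is precisely that adaptation (rearrangement competitor, contact set $Z$ with affine pieces on its complementary intervals, Jensen for $\tilde f'\in L^2$, and the expansion of $\norm{g'}^2$ with the cross term $\int_0^T (g-\tilde f)\,d(-\tilde f')\ge 0$ supported on $Z$). The two points where you correctly deviate from the Wiener proof are exactly the adjustments the bridge case requires: no reduction to non-decreasing $f$ (the signed decreasing rearrangement of $f'$ still yields a concave majorant with $g(T)=\int_0^T f'=0$), and no need for the paper's restriction to competitors with $g(T)=\tilde f(T)$, since every $g\in\mathbb H_{B^0}$ vanishes at both endpoints, killing the boundary terms automatically.
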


%

\section*{Acknowledgements}
	
	The authors thank anonymous referees for their careful reading of the manuscript and valuable remarks which helped to improve the article.
	Financial support from  SNSF Grant 200021-175752/1 is kindly acknowledged. 

\bibliographystyle{plain}

\bibliography{abcbib}
\end{document}